\newcommand{\heat}{(\frac{d}{dt}-\Delta)}
\title
[Curvature Decay Estimates of MCF in Higher Codimensions]
{Curvature Decay Estimates of Graphical Mean Curvature Flow in Higher Codimensions}
\author[Knut Smoczyk]{\sc Knut Smoczyk$^\ast$}
\address{
$^\ast$Leibniz Universit\"at Hannover,
Institut f\"ur Differentialgeometrie und Riemann Center for Geometry and Physics,
Welfengarten 1,
30167 Hannover,
Germany}
\email{smoczyk@math.uni-hannover.de}
\author[Mao-Pei Tsui]{\sc Mao-Pei Tsui$^{\ast\ast}$}
\address{
$^{\ast\ast}$University of Toledo,
Department of Mathematics and Statistics,
2801 W. Bancroft St,
Toledo, Ohio 43606-3390}
\email{mao-pei.tsui@utoledo.edu}
\author[Mu-Tao Wang]{\sc Mu-Tao Wang$^{\ast\ast\ast}$}
\address{
$^{\ast\ast\ast}$Columbia University,
Department of Mathematics,
2990 Broadway,
New York, NY 10027}
\email{mtwang@math.columbia.edu}
\thanks{The first author was supported by the DFG (German Research Foundation).
The second author was partially  supported  by a  Collaboration Grant for Mathematicians from the Simons Foundation \#239677.
The third author was partially supported by National Science
Foundation Grant DMS 1105483.}%
\subjclass[2000]{Primary 53C44;
}
\keywords{Mean curvature flow}%
\date{November 28, 2014}
\newtheorem{thm}{Theorem}
\newtheorem{pro}{Proposition}
\newtheorem{lem}{Lemma}
\newtheorem{rem}{Remark}
\numberwithin{cor}{section}
\numberwithin{pro}{section} \numberwithin{dfn}{section}
\numberwithin{lem}{section}
\numberwithin{rem}{section}\numberwithin{equation}{section}
\newcommand{\R}{\mathbb R}
\begin{document}

\maketitle
\begin{abstract}

We derive pointwise curvature estimates for graphical mean curvature flows in higher codimensions for a flat ambient space. To the best of our knowledge, this is the first such estimates without assuming smallness of first derivatives of the defining map.
An immediate application is a convergence theorem of the mean curvature flow of  the  graph of an
area decreasing map between flat Riemann surfaces.\end{abstract}

\section{Introduction}
Let $\Sigma_1$ and $\Sigma_2$ be two compact Riemannian manifolds
and $M = \Sigma_1 \times \Sigma_2$ be the product manifold. We
consider a smooth map $f:\Sigma_1 \rightarrow \Sigma_2$ and denote
the graph of $f$ by $\Sigma$; $\Sigma$ is a submanifold of $M$ by
the embedding $id\times f$.
 We study the
deformation of $f$ by the mean curvature flow. The idea is to deform $\Sigma$ along its mean curvature vector field in $M$ with the hope
that $\Sigma$ will remain a graph.
  This is the negative gradient flow of the
volume functional and a stationary point is a ``minimal map"
introduced by Schoen in \cite{sch}.


To describe previous
results, we recall the differential of $f$, $df$, at each point of
$\Sigma_1$ is a linear map between the tangent spaces. The
Riemannian structures enable us to define the adjoint of $df$.
Let $\{\lambda_i\}$ denote the eigenvalues of $\sqrt{ (df)^T df}$,
or the singular values of $df$, where $(df)^T$ is the adjoint of
$df$. Note that $\lambda_i$ is always nonnegative. We say $f$ is
an {\it area decreasing map} if $\lambda_i \lambda_j < 1$ for any
$i\not=j$ at each point. In particular, $f$ is area-decreasing if $df$ has rank one everywhere.

In \cite{tw}, see also \cite{wa4, wa5, wa6}, it was proved that the area decreasing condition is preserved along
the mean curvature flow and that the following global existence and
convergence theorem holds.

{\bf Theorem }{\rm (\cite{tw}, 2004).\,}{\it
Let $\Sigma_1$ and
$\Sigma_2$ be compact Riemannian manifolds of constant sectional curvatures
$k_1$ and $k_2$ respectively. Suppose $k_1\geq |k_2|$, $k_1+k_2
\geq 0$ and $dim(\Sigma_1) \ge 2$. If $f$ is a smooth area decreasing map from $\Sigma_1$ to $\Sigma_2$, the mean curvature flow of the graph of $f$
 remains the graph of an area decreasing map and  exists for all time.
 Moreover, if $k_1+k_2 > 0$  then it  converges smoothly to the graph of
  a constant map.}

This result has been generalized to allow more general curvature conditions \cite{lee, ss}. For example, the convergence part can be established when $k_1+k_2=0$ and $k_1\geq |k_2|>0$ in \cite{lee}.
An important ingredient of these proofs is to use the positivity of $k_1$ to show that the gradient of $f$ approaches zero as $t\rightarrow \infty$. In \cite{ss} the convergence follows, if
the sectional curvatures $\operatorname{sec}_{\Sigma_1},\operatorname{sec}_{\Sigma_2}$
of $\Sigma_1$ and $\Sigma_2$ are not necessarily constant and satisfy
$$\operatorname{sec}_{\Sigma_1}>-\sigma,\quad\operatorname{Ric}_{\Sigma_1}
\ge(n-1)\sigma\ge(n-1)\operatorname{sec}_{\Sigma_2}$$
for some positive constant $\sigma$, where $n=\dim(\Sigma_1)$. In this case the positivity of $\operatorname{Ric}_{\Sigma_1}$ is important to get the convergence.
However, in all cases mentioned above the convergence part in the case $k_1=k_2=0$ remains an open standing problem.

In general, the global existence and convergence of a mean curvature flow relies on the boundedness of the second fundamental form. In the above theorem, the boundedness of the second fundamental form is obtained by an indirect blow-up argument, see \cite{wa1, wa3, tw}. While the idea of the proof of convergence is to use the positivity of $k_1+k_2$ (or $k_1$ resp. $\operatorname{Ric}_{\Sigma_1}$) to show that
the gradient of $f$ is approaching zero, which in turn gives the boundedness of the second fundamental form when the flow exists for sufficiently long time. In \cite{ss2}
mean curvature estimates are shown in case of length decreasing maps ($\lambda_i<1$).
Other curvature estimates for higher co-dimensional graphical mean curvature flows have been obtained under various conditions \cite{cch,ccy}.
 However, to the best of our knowledge, there is no direct pointwise curvature estimate for higher codimensional mean curvature flow without
assuming smallness conditions on the Lipschitz norm of the first derivatives.  In this paper, we prove pointwise estimates assuming only a weaker
condition (area decreasing) on the gradient of the map. As a result,  the convergence of the flow can be established in dimension two when $k_1=k_2=0$.

\begin{thm}\label{thm1}
Let $(\Sigma_1,g_1)$ and $(\Sigma_2,g_2)$ be complete flat Riemann surfaces, $\Sigma_1$ being compact. Suppose $\Sigma\subset(\Sigma_1\times \Sigma_2,g_1\times g_2)$ is the graph of an
area decreasing map $f:\Sigma_1\to \Sigma_2$
and let $\Sigma_t$ denote its mean curvature flow with initial surface
$\Sigma_0=\Sigma$. Then $\Sigma_t$ remains the graph of an
area decreasing map $f_t$ along the mean curvature flow. The flow
exists smoothly for all time and $\Sigma_t$ converges smoothly to
a totally geodesic submanifold as $t\rightarrow \infty$.
Moreover, we have the following mean curvature decay estimate
\[t|H|^2 \leq  \frac{2}{\alpha}\] where $\alpha=inf_{\Sigma_0}\frac{2(1-\lambda_1^2\lambda_2^2)}{(1+\lambda_1^2)(1+\lambda_2^2)} >0$ and $\lambda_1$ and $\lambda_2$ are the singular values of $df$.
\end{thm}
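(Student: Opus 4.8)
\smallskip

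The strategy is to combine the preservation of the area-decreasing condition with a Bochner-type differential inequality for a suitably chosen scalar quantity, and then close a parabolic maximum-principle argument that yields both long-time existence and the sharp decay $t|H|^2\le 2/\alpha$. First I would introduce, following \cite{tw, wa4}, the quantity
\[
\ast\Omega=\frac{1}{\sqrt{(1+\lambda_1^2)(1+\lambda_2^2)}},
\]
the Jacobian of the projection $\pi_1:\Sigma_t\to\Sigma_1$, which is positive exactly when $\Sigma_t$ is a graph. The area-decreasing condition is equivalent to a pointwise inequality involving $\ast\Omega$ and the singular values, and the first task is to recall from \cite{tw} (valid here since the ambient space is flat, so all curvature terms in the evolution of $\ast\Omega$ drop out) that $\ast\Omega$ satisfies
\[
\heat\ast\Omega \;\ge\; \ast\Omega\,|A|^2\,\eta,
\]
where $\eta\ge 0$ is a nonnegative function of the singular values that is bounded below by a positive multiple of $(1-\lambda_1^2\lambda_2^2)$ on the area-decreasing region; this simultaneously shows that $\ast\Omega$ stays positive (so the flow remains graphical) and that the area-decreasing condition is preserved.

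\smallskip

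The heart of the matter is the curvature estimate. I would study the test function $u = \dfrac{|H|^2}{(\ast\Omega)^{2}}$, or more robustly $u=|H|^2\,\varphi(\ast\Omega)$ for a concave increasing $\varphi$ chosen so that the bad gradient terms cancel. Using the flat-ambient evolution equations $\heat|H|^2 = -2|\nabla H|^2 + (\text{quadratic in }A\ast H\ast H)$ together with the inequality for $\ast\Omega$ above, one computes
\[
\heat u \;\le\; -\,(\text{good nonnegative gradient terms}) \;-\; c\,|A|^2\,u
\]
on the region where $f$ is area-decreasing, with $c>0$ controlled from below by $\alpha$. The point of dividing by a power of $\ast\Omega$ is that the reaction term $|A|^2|H|^2$ coming from the second fundamental form — which in higher codimension is \emph{not} pointwise comparable to $|H|^4$ — is absorbed by the favorable $\ast\Omega\,|A|^2\eta$ term; this is exactly where the area-decreasing hypothesis (rather than a smallness-of-gradient hypothesis) is used, and it is the step I expect to be the main obstacle, since one must track the full tensorial structure of $A\ast A\ast \ast\Omega$ and verify the algebraic inequality relating $\eta$, the singular values, and the constant $\alpha$.

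\smallskip

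Once the differential inequality $\heat u \le -c\,|A|^2 u$ (or the slightly stronger $\heat u \le -c\, u^2$, using $|H|^2\le n|A|^2$ in dimension-two, $n=2$) is in hand, the conclusion follows from standard parabolic arguments. Since $\Sigma_1$ is compact, $\Sigma_t$ is compact, and the maximum principle applied to $\heat u\le 0$ first gives a uniform bound on $u$, hence on $|H|^2$, for all times the flow exists; combined with the graphical structure (bounded $\ast\Omega$ from below away from $0$, by the first step) and standard bootstrapping this rules out finite-time singularities, giving long-time existence. For the decay, I would apply the maximum principle to $t\,u$: from $\heat u\le -\tfrac{\alpha}{2} u^2$ (after inserting $|H|^2\le 2|A|^2$ and the lower bound $c\ge \alpha/2$) one gets $\heat(t\,u) \le u - \tfrac{\alpha}{2} t u^2 = u(1-\tfrac{\alpha}{2}tu)$, which is $\le 0$ wherever $t u\ge 2/\alpha$; hence $\max_{\Sigma_t} t\,|H|^2 \le \max_{\Sigma_t} t\,u \le 2/\alpha$ for all $t>0$. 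Finally, $t|H|^2\to 0$ forces $|H|\to 0$; feeding this back with the uniform curvature bound and interior estimates shows $|A|\to 0$, so $\Sigma_t$ subconverges to a totally geodesic limit, and flatness of the ambient factors together with the graphical property identifies the limit as the graph of a (constant, in each flat chart) map — a totally geodesic submanifold. Smooth convergence then follows from the exponential-type decay that the inequality $\heat u\le -\tfrac{\alpha}{2}u^2$ provides once $u$ is small.
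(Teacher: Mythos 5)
Your overall plan has the right shape—compare $|H|^2$ against a scalar built from the singular values of $df$, cancel the quadratic reaction term, and close with the maximum principle—and your conclusion steps (long-time existence from a uniform curvature bound plus graphicality, then $\epsilon$-regularity and Simon's theorem for smooth convergence to a totally geodesic limit) match the paper. However, the central step is genuinely incomplete, and the specific test quantity you propose cannot be made to work.

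The paper's proof pivots on an \emph{exact} identity for $Tr(S)$, where $S(X,Y)=\langle\pi_1X,\pi_1Y\rangle-\langle\pi_2X,\pi_2Y\rangle$ and $Tr(S)=\frac{2(1-\lambda_1^2\lambda_2^2)}{(1+\lambda_1^2)(1+\lambda_2^2)}$. Proposition~\ref{etrS} shows
\[
\heat\ln Tr(S)=2|A|^2+\tfrac12|\nabla\ln Tr(S)|^2+\frac{2\sum_{c}(T_{11}h_{4c2}+T_{22}h_{3c1})^2}{Tr(S)^2},
\]
and the key point is that the coefficient of $|A|^2$ is exactly $2$ and the gradient term has coefficient exactly $\tfrac12$, matching Lemma~\ref{Adecay} term by term. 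This relies on the perfect-square identity \eqref{relation}, which in turn uses $S_{ii}^2+T_{ii}^2=1$. Subtracting, all $|A|^2$ terms cancel, and what is left is of the form $\langle\nabla\phi, X\rangle$, so the maximum principle applies directly.

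Your quantity $\ast\Omega=\big[(1+\lambda_1^2)(1+\lambda_2^2)\big]^{-1/2}$ does not satisfy any such identity: the best one gets (by completing the square on the cross terms in the evolution of $\ast\Omega$) is the \emph{inequality} $\heat\ln\ast\Omega\ge(1-\lambda_1\lambda_2)|A|^2+|\nabla\ln\ast\Omega|^2$, and $(1-\lambda_1\lambda_2)$ is variable and strictly less than $2$. Raising $\ast\Omega$ to a power $2k$ to boost the $|A|^2$ coefficient past $2$ ruins the gradient cancellation: at a spatial maximum of $\ln\big[(t|H|^2+1)/(\ast\Omega)^{2k}\big]$ one has $\nabla\ln(t|H|^2+1)=2k\nabla\ln\ast\Omega$, and the leftover gradient contribution is $2k(k-1)|\nabla\ln\ast\Omega|^2\ge 0$—the wrong sign for $k>1$. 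A concave reparametrization $\varphi(\ast\Omega)$ cannot repair this, since $Tr(S)=2(1-\lambda_1^2\lambda_2^2)\,(\ast\Omega)^2$ is not a function of $\ast\Omega$ alone (knowing $(1+\lambda_1^2)(1+\lambda_2^2)$ does not determine $\lambda_1\lambda_2$), so no change of variables turns $\ast\Omega$ into $Tr(S)$. You correctly flag this step as ``the main obstacle''; the resolution in the paper is precisely the choice of $Tr(S)$, the parallel-tensor machinery behind Proposition~\ref{evo_sym_l}, and the algebraic identity \eqref{relation}—none of which is visible from the $\ast\Omega$ side. A secondary gap: you assert that the inequality for $\ast\Omega$ ``simultaneously shows $\ldots$ that the area-decreasing condition is preserved,'' but positivity of $\ast\Omega$ gives graphicality only; preservation of $\lambda_1\lambda_2<1$ is exactly what the maximum principle applied to \eqref{lnTrS} delivers, and would have to be argued separately in your framework.
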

\begin{rem}
Let $f:\Sigma_1\to\Sigma_2$ be an arbitrary smooth map between flat Riemann surfaces $(\Sigma_1,g_1)$, $(\Sigma_2,g_2)$
and suppose $\Sigma_1$ is compact. Then there exists a constant $c>0$ such that all singular values of $f$ satisfy $\lambda_i\lambda_j<c^2$. The map
$f:(\Sigma_1,g_1)\to(\Sigma_2,c^{-2}g_2)$ becomes area decreasing and we can apply Theorem
\ref{thm1} to this case since the new metric $\tilde g_2=c^{-2}g_2$ is still flat. Such a procedure only applies in the flat case and changes the Lipschitz norm of the map. Nevertheless, the geometry of the graph of $f$ is different. 
\end{rem}

As in \cite{sw1}, consider the symplectic structure $dx^1 \wedge dy^1+dx^2\wedge dy^2$ on $(T^2, \{x^i\}_{i=1,2})\times (T^2, \{y^j\}_{j=1,2})$
and suppose $\Sigma$ is Lagrangian with respect to this symplectic structure.
A stronger decay estimate on the second fundamental can be obtained in this case:

\begin{thm}\label{thm2}
Let $f:T^2\to T^2$ be an area decreasing map such that its graph $\Sigma$
is a Lagrangian submanifold in $T^2 \times T^2$ with respect to the above symplectic
structure, then the same conclusion as in Theorem \ref{thm1} holds and  \[t|A|^2 \leq C_\alpha,\] where $C_\alpha$ is a positive constant that only depends on $\alpha$.
\end{thm}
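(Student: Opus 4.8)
Since $T^2$ is a compact flat Riemann surface and $(T^2,g)$ is complete and flat, Theorem \ref{thm1} applies at once: the flow $\Sigma_t$ exists for all time, remains the graph of an area decreasing map $f_t$, converges smoothly to a totally geodesic flat subtorus, and obeys $t|H|^2\le\tfrac2\alpha$; moreover, since $(T^2\times T^2,g_1\times g_2)$ with the symplectic form $dx^1\wedge dy^1+dx^2\wedge dy^2$ is flat Calabi--Yau, the Lagrangian condition is preserved and every $\Sigma_t$ is Lagrangian. In the Lagrangian setting the second fundamental form is recorded by the fully symmetric cubic form $h_{ijk}=\langle A(e_i,e_j),Je_k\rangle$, with $|A|^2=|h|^2$ and $H_k=g^{ij}h_{ijk}$, and there is a Lagrangian angle $\theta_t$ with $\heat\theta=0$ and $|\nabla\theta|^2=|H|^2$; area decreasing moreover forces $|\theta|<\tfrac\pi2$, i.e. the graph is almost calibrated (this is the link to \cite{sw1}). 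So it remains to promote the mean curvature decay to the full $|A|^2$, and the plan is to couple $t|A|^2$ to the function $\Phi=\tfrac{1-\lambda_1^2}{1+\lambda_1^2}+\tfrac{1-\lambda_2^2}{1+\lambda_2^2}=\tfrac{2(1-\lambda_1^2\lambda_2^2)}{(1+\lambda_1^2)(1+\lambda_2^2)}$ in a parabolic maximum principle, exactly as in the proof of Theorem \ref{thm1} but now for $|A|^2$ instead of $|H|^2$.

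Two evolution inequalities drive this. First, the inequality for $\Phi$ underlying Theorem \ref{thm1} --- the very computation that makes the area decreasing condition be preserved and improve --- which along the flow gives $\alpha\le\Phi\le2$ and, schematically, $\heat\Phi\ge\varepsilon|A|^2\,\Phi$ modulo favorable gradient terms, for some $\varepsilon>0$; here it is essential that the reaction term be controlled from below by the \emph{full} $|A|^2$, which in dimension two and under the area decreasing hypothesis it is. Second, the Simons-type identity for a surface in flat $\R^4$, which gives $\heat|A|^2\le-2|\nabla^\perp A|^2+c_0|A|^4$ for a universal $c_0$ (its sharp form being accessible here from the cubic form $h$, the harmonicity of $\theta$, and the Gauss equation $K_{\Sigma_t}=\tfrac12(|H|^2-|A|^2)$). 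Granting these, consider $G=t|A|^2\,\Phi^{-N}+\Lambda(2-\Phi)$ with $N,\Lambda$ chosen (depending only on $\alpha$, through $\varepsilon$ and $c_0$) large enough that: the negative reaction $-Nt|A|^2\Phi^{-N-1}\heat\Phi\le-N\varepsilon\,t|A|^4\Phi^{-N}$ beats the positive $c_0\,t|A|^4\Phi^{-N}$ produced by $\Phi^{-N}\heat(t|A|^2)$; the gradient cross terms $2\langle\nabla(t|A|^2),\nabla\Phi^{-N}\rangle$ are absorbed, via Young's inequality, by the good terms $-2t\Phi^{-N}|\nabla^\perp A|^2$ and the $|\nabla\Phi|^2$ terms generated by $\heat\Phi^{-N}$; and $\Lambda(2-\Phi)$ mops up the leftover $|A|^2\Phi^{-N}$ using $\heat\Phi\ge\varepsilon|A|^2\Phi$. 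Then $\heat G\le0$, so by the maximum principle on the closed surface $\sup_{\Sigma_t}G\le\sup_{\Sigma_0}G=\Lambda\bigl(2-\inf_{\Sigma_0}\Phi\bigr)=\Lambda(2-\alpha)$, and since $t|A|^2\le\Phi^{N}G\le2^N\Lambda(2-\alpha)$ we obtain $t|A|^2\le C_\alpha$ with $C_\alpha=2^N\Lambda(2-\alpha)$.

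The main obstacle is precisely the bookkeeping in the last step: one has to pin down the two evolution inequalities with good enough constants so that $N$ and $\Lambda$ can be chosen with all reaction and gradient terms of the correct sign --- in particular the $\Phi$ inequality must involve the full $|A|^2$, and this is where dimension two, the Lagrangian structure (the harmonic angle, the identification of the normal bundle with the tangent bundle via $J$, and $K_{\Sigma_t}=\tfrac12(|H|^2-|A|^2)$) and the area decreasing hypothesis are all used. As a robustness check, a softer argument also yields $t|A|^2\le C$ (with a constant that in addition depends on the geometry of $\Sigma_0$): since $\Sigma_t\cong T^2$, Gauss--Bonnet gives $\int_{\Sigma_t}K\,d\mu=0$, hence $\int_{\Sigma_t}|A|^2\,d\mu=\int_{\Sigma_t}|H|^2\,d\mu\le\tfrac{2}{\alpha t}\operatorname{Area}(\Sigma_0)$ by Theorem \ref{thm1}, and feeding this $L^1$-decay together with $\heat|A|^2\le c_0|A|^4-2|\nabla^\perp A|^2$ and the uniform geometry of the $\Sigma_t$ into a parabolic mean-value (Moser) iteration over a unit time interval upgrades it to $\sup_{\Sigma_t}|A|^2\le C/t$.
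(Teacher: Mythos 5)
Your overall plan is right in spirit---you propose to couple $t|A|^2$ with the area-decreasing quantity $\Phi=\operatorname{Tr}(S)$ and run a parabolic maximum principle together with the $|H|^2$ decay from Theorem \ref{thm1}---and this is indeed the skeleton of the paper's argument (the paper works with $\ln(|A|^2/\Phi^2)$ and compares with an explicit ODE supersolution rather than with your test function $G=t|A|^2\Phi^{-N}+\Lambda(2-\Phi)$). But the proposal has a genuine gap at the one step that actually carries the proof, and there is also a claim that contradicts the structure of the paper.

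You assert that $\heat\Phi\ge\varepsilon|A|^2\Phi$ ``modulo favorable gradient terms'' holds ``in dimension two and under the area decreasing hypothesis.'' This is false without the Lagrangian hypothesis; if it were true, the $t|A|^2$ decay would follow for \emph{every} area-decreasing graph in $T^2\times T^2$, whereas Theorem \ref{thm1} only yields the $t|H|^2$ decay in that generality and Theorem \ref{thm2} assumes Lagrangian precisely to upgrade it. The explicit evolution equation \eqref{Tr(S)} is
\[
\heat\operatorname{Tr}(S)=2|A|^2\operatorname{Tr}(S)+2(S_{11}-S_{22})\sum_{c=1}^2(h_{3c1}^2-h_{4c2}^2),
\]
and for a general area-decreasing graph the error term is only bounded by $4|A|^2$, which overwhelms $2|A|^2\operatorname{Tr}(S)$ when $\operatorname{Tr}(S)$ is small; the identity \eqref{relation} only converts it into $-\frac{|\nabla\Phi|^2}{2\Phi}$, which has the \emph{wrong} sign for the lower bound you want. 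So the inequality you build your scheme on is not available in the non-Lagrangian case, and you never supply the mechanism that makes it available in the Lagrangian case.

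That mechanism---the actual crux of the proof---is the full symmetry of the cubic form $h_{ijk}=\langle\nabla^M_{e_i}e_j,Je_k\rangle$: with $e_3=Je_1$, $e_4=Je_2$ one gets $h_{3c2}=h_{4c1}$, hence
\[
\sum_{c=1}^2(h_{3c1}^2-h_{4c2}^2)=(h_{311}-h_{322})H_3+(h_{411}-h_{422})H_4,
\qquad
\Big|\sum_{c=1}^2(h_{3c1}^2-h_{4c2}^2)\Big|\le 2\sqrt2\,|A||H|,
\]
so the dangerous quadratic term in $\heat\Phi$ is in fact of \emph{mixed} order $|A||H|$ and is absorbable by Young's inequality once $t|H|^2\le 2/\alpha$ is known. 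You list the harmonic Lagrangian angle, $|\nabla\theta|=|H|$, the almost-calibrated condition, and the Gauss equation as the Lagrangian ingredients; none of these is what is used, and none of them by itself produces the required $|A||H|$ bound. The ``bookkeeping in the last step'' that you defer is not bookkeeping---it \emph{is} the theorem. (Indeed, if you check the balance of constants in your $G=t|A|^2\Phi^{-N}+\Lambda(2-\Phi)$, the reaction term from $\heat|A|^2\le -2|\nabla^\perp A|^2+3|A|^4$ wants $N\ge 3/2$, while absorbing the cross term $-2t\nabla|A|^2\cdot\nabla\Phi^{-N}$ into the $|\nabla^\perp A|^2$ and $|\nabla\Phi|^2$ terms forces $N$ small; the paper's logarithmic formulation avoids this conflict by pairing $|\nabla\ln\Phi|^2$ against the exact square $\frac{4}{\Phi^2}\sum_c(T_{11}h_{4c2}+T_{22}h_{3c1})^2$ via \eqref{relation}.) Finally, your ``softer'' fallback via Gauss--Bonnet and Moser iteration produces a constant depending on $\operatorname{Area}(\Sigma_0)$, not only on $\alpha$, so it does not give the stated $C_\alpha$.
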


We first revisit the curvature estimates in codimension one by Ecker and Huisken \cite{eh1}. A direct generalization of their estimate only works in the higher codimensional case when the gradient of the defining function is small enough. However, we were able to reformulate their estimates in a different way that can be adapted to the higher codimensional case. It turns out in higher codimensions a more sophisticated approach has to be developed to accommodate the complexity of the normal bundle.

\noindent{\it Acknowledgements.} Part of this paper was completed
while the  authors were visiting Taida Institute of Mathematical
Sciences, National Center for Theoretical
Sciences, Taipei Office in National Taiwan University, Taipei, Taiwan and Riemann Center for Geometry and Physics in 
Leibniz Universit\"at Hannover. The authors wish to express their  gratitude for the excellent support they received during their stay.

\section{Ecker and Huisken's estimates in codimension one}
In this section, we slightly rewrite the estimate in \cite{eh1} so it can be adapted to the higher codimensional situation in later sections.
Consider the mean curvature flow of the graph of a function
$f:\R^n \rightarrow \R$ and let $v=\sqrt{1+|Df|^2}$.
Recall that the evolution equations of $v$ and $|A|^2$ are
\begin{equation*}
\begin{split}
 \heat v  = &  - |A|^2 v -2 \frac{|\nabla v|^2}{v} \text{ and }\\
 \heat |A|^2   = &  - 2|\nabla A|^2  +2 |A|^4.\\
\end{split}
\end{equation*}

We obtain the evolution equation of $\ln v^2$ as
\begin{equation}\label{eq_v^2}\heat \ln(v^2) =- 2|A|^2  - \frac{1}{2}|\nabla\ln (v^2)|^2.\end{equation}

Using $|\nabla A|^2 \geq |\nabla |A||^2 =\frac{|A|^2}{4}|\nabla \ln |A|^2|^2 $, we have
$\heat |A|^2   \leq  - \frac{|A|^2}{2}|\nabla \ln |A|^2|^2  +2 |A|^4.$
Taking $\ln$ of $|A|^2$, we obtain
\begin{equation}\label{eq_A^2}
\begin{split}
\heat \ln(|A|^2)  \leq 2|A|^2 + \frac{1}{2}|\nabla \ln |A|^2|^2.
\end{split}
\end{equation}
As in \cite{eh1}, equations \eqref{eq_v^2} and \eqref{eq_A^2} together imply a sup norm bound for $|A|^2 v^2$.

The following differential inequality for  $\ln(\delta t   |A|^2+\epsilon)$, which is similar to equation \eqref{eq_A^2}, gives a
decay estimate of $|A|^2$.
\begin{lem}\label{decay_codim_1}
 Given any $\epsilon >0$ and $\delta < 2\epsilon$. Then  \[ \heat \ln(\delta t   |A|^2+\epsilon) \leq  2|A|^2  + \frac{1}{2} |\nabla \ln(\delta t   |A|^2+\epsilon)|^2.\]
\end{lem}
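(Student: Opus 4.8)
The plan is to estimate $\heat \ln(\delta t\,|A|^2+\epsilon)$ directly and compare term by term with the inequality \eqref{eq_A^2} for $\ln(|A|^2)$. Write $w = \delta t\,|A|^2+\epsilon$. Since $w$ depends on $t$ both explicitly and through $|A|^2$, the chain rule gives
\[
\heat w = \delta|A|^2 + \delta t\,\heat |A|^2,
\]
and hence, using the evolution equation $\heat |A|^2 = -2|\nabla A|^2 + 2|A|^4$ and $\heat \ln w = \tfrac{1}{w}\heat w + \tfrac{1}{w^2}|\nabla w|^2$ (the sign: $\Delta \ln w = \tfrac{1}{w}\Delta w - \tfrac{1}{w^2}|\nabla w|^2$, so $\heat \ln w = \tfrac{1}{w}\heat w + \tfrac{1}{w^2}|\nabla w|^2$), I get
\[
\heat \ln w = \frac{\delta|A|^2 - 2\delta t\,|\nabla A|^2 + 2\delta t\,|A|^4}{\delta t\,|A|^2+\epsilon} + |\nabla \ln w|^2.
\]

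The next step is to control each piece. For the gradient term, note $\nabla w = \delta t\,\nabla|A|^2$, so $|\nabla w|^2 = \delta^2 t^2 |\nabla |A|^2|^2 \le \delta^2 t^2 \cdot 4|A|^2|\nabla A|^2$ by Kato, and therefore $|\nabla \ln w|^2 = \tfrac{|\nabla w|^2}{w^2} \le \tfrac{4\delta^2 t^2 |A|^2 |\nabla A|^2}{w^2}$. One wants to absorb half of this against the good term $-\tfrac{2\delta t\,|\nabla A|^2}{w}$ appearing above; this requires $\tfrac{4\delta^2 t^2|A|^2}{w^2} \le \tfrac{\delta t}{w}$, i.e. $4\delta t\,|A|^2 \le w = \delta t\,|A|^2+\epsilon$, which need not hold. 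So instead I keep $\tfrac12|\nabla\ln w|^2$ on the right (that is the term allowed in the statement) and use the full gradient good term $-\tfrac{2\delta t|\nabla A|^2}{w}$ to kill the remaining $\tfrac12|\nabla \ln w|^2$: this now only requires $\tfrac{2\delta^2 t^2|A|^2}{w^2}\le \tfrac{2\delta t}{w}$, i.e. $\delta t|A|^2 \le w$, which is true since $w = \delta t|A|^2+\epsilon$ and $\epsilon>0$. Thus the two $|\nabla A|$-related contributions beyond $\tfrac12|\nabla\ln w|^2$ are nonpositive and can be dropped.

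What remains is the zeroth-order part: I must show $\tfrac{\delta|A|^2 + 2\delta t|A|^4}{\delta t|A|^2+\epsilon} \le 2|A|^2$, equivalently $\delta|A|^2 + 2\delta t|A|^4 \le 2|A|^2(\delta t|A|^2+\epsilon) = 2\delta t|A|^4 + 2\epsilon|A|^2$, which reduces to $\delta|A|^2 \le 2\epsilon|A|^2$, i.e. $\delta \le 2\epsilon$ — precisely the hypothesis. Combining, $\heat\ln w \le 2|A|^2 + \tfrac12|\nabla\ln w|^2$, as claimed. The only subtlety worth flagging is the careful bookkeeping of how much of the Kato good term $|\nabla A|^2$ is needed where; everything else is a direct substitution, and the two structural constraints $\delta < 2\epsilon$ and $\epsilon>0$ are exactly what make the zeroth-order and gradient comparisons go through.
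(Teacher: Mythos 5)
Your argument is correct and matches the paper's proof in all essentials: compute $\heat\ln(\delta t|A|^2+\epsilon)$ from the evolution of $|A|^2$, absorb the gradient terms using Kato's inequality together with the trivial bound $\delta t|A|^2 \le \delta t|A|^2+\epsilon$, and use $\delta \le 2\epsilon$ to control the zeroth-order piece. The only cosmetic difference is where Kato is applied---you use it to bound $|\nabla\ln(\delta t|A|^2+\epsilon)|^2$ from above by a multiple of $|\nabla A|^2$, while the paper uses it (via equation \eqref{eq_A^2}) to weaken $-2|\nabla A|^2$ to $-|\nabla|A|^2|^2/(2|A|^2)$---but the decisive comparison is identical.
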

\begin{proof}

Using
$\triangle \ln(\delta t|A|^2+\epsilon)
= \frac{\delta t\triangle |A|^2}{\delta t|A|^2+\epsilon}-\frac{\delta^2 t^2|\nabla  |A|^2|^2 }{(\delta t |A|^2+\epsilon)^2}$
, we  compute the evolution equation of $\ln(\delta t|A|^2+\epsilon)$:
\begin{eqnarray*}
&& \heat \ln(\delta t   |A|^2+\epsilon) \\
&=&  \frac{1}{\delta t   |A|^2+\epsilon} \heat (\delta t   |A|^2+\epsilon)  +
|\nabla \ln(\delta t   |A|^2+\epsilon)|^2 \\
&\leq&  \frac{1}{\delta t   |A|^2+\epsilon}\Big(\delta |A|^2 + \delta t   ( 2 |A|^4- \frac{|\nabla |A|^2|^2}{2|A|^2})\Big)+|\nabla \ln(\delta t   |A|^2+\epsilon)|^2 \\
&\leq &   \frac{\delta |A|^2+2[\delta t   |A|^2+\epsilon] |A|^2-2\epsilon |A|^2}{\delta t   |A|^2+\epsilon} -
\frac{1}{2}\frac{  \delta t |\nabla |A|^2|^2}{(\delta t   |A|^2+\epsilon)|A|^2} \\
&&+|\nabla \ln(\delta t   |A|^2+\epsilon)|^2\\
&\leq&   2|A|^2+ \frac{\delta |A|^2-2\epsilon |A|^2}{\delta t   |A|^2+\epsilon}  -
\frac{1}{2}\frac{  \delta t |\nabla |A|^2|^2}{(\delta t   |A|^2+\epsilon)|A|^2}
+|\nabla \ln(\delta t   |A|^2+\epsilon)|^2\\
&\leq & 2|A|^2 + \frac{1}{2} |\nabla \ln(\delta t   |A|^2+\epsilon)|^2.
\end{eqnarray*}
Here we use $\delta -2\epsilon< 0$ and
$$-\frac{1}{2}\frac{  \delta t |\nabla |A|^2|^2}{(\delta t   |A|^2+\epsilon)|A|^2}+
\frac{1}{2} |\nabla \ln(\delta t   |A|^2+\epsilon)|^2 \leq 0.$$
\end{proof}

\begin{thm}
$sup_{\Sigma_t}(t|A|^2) \leq v_0^2$ where $v_0= sup_{\Sigma_0} v > 0$.
\end{thm}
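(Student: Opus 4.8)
The plan is to pair the differential inequality of Lemma~\ref{decay_codim_1} for $\ln(\delta t\,|A|^2+\epsilon)$ with the \emph{exact} evolution equation~\eqref{eq_v^2} for $\ln(v^2)$. When these are added, the dangerous reaction term $+2|A|^2$ in Lemma~\ref{decay_codim_1} is cancelled by the $-2|A|^2$ coming from~\eqref{eq_v^2}, and only gradient terms survive; these can be absorbed into a drift term, so that the sum satisfies a linear parabolic inequality to which the maximum principle applies. This is exactly the Ecker--Huisken argument for the a priori bound on $|A|^2 v^2$, but run with the time-dependent regularization $\delta t\,|A|^2+\epsilon$ in place of $|A|^2$, which is what produces the $1/t$ decay.

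Concretely, fix $\epsilon>0$ and $\delta\in(0,2\epsilon)$ and set
\[
w\;:=\;\ln\!\big((\delta t\,|A|^2+\epsilon)\,v^2\big)\;=\;a+b,\qquad a:=\ln(\delta t\,|A|^2+\epsilon),\quad b:=\ln(v^2).
\]
Adding Lemma~\ref{decay_codim_1} to~\eqref{eq_v^2} gives $\heat w\le\tfrac12|\nabla a|^2-\tfrac12|\nabla b|^2$, and the polarization identity $\tfrac12|\nabla a|^2-\tfrac12|\nabla b|^2=\tfrac12\langle\nabla(a-b),\nabla(a+b)\rangle$ turns this into
\[
\heat w\;\le\;\tfrac12\Big\langle\nabla\ln\tfrac{\delta t\,|A|^2+\epsilon}{v^2},\ \nabla w\Big\rangle.
\]
Because $\delta t\,|A|^2+\epsilon>0$ and $v\ge1>0$, both $w$ and the drift vector field $\tfrac12\nabla\ln\tfrac{\delta t\,|A|^2+\epsilon}{v^2}$ are smooth; in particular nothing blows up where $A$ vanishes. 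First I would invoke the parabolic maximum principle on $\Sigma_t$ --- immediate in the compact setting, and in the Euclidean case of this section via the localization/growth control of~\cite{eh1} --- to conclude that $t\mapsto\max_{\Sigma_t}w$ is non-increasing.

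At $t=0$ one has $w=\ln(\epsilon v^2)$, hence $\max_{\Sigma_0}w=\ln(\epsilon v_0^2)$, so $(\delta t\,|A|^2+\epsilon)\,v^2\le\epsilon v_0^2$ for all $t\ge0$. Using $v^2\ge1$ this gives $\delta t\,|A|^2\le\epsilon v_0^2$, i.e. $t\,|A|^2\le\tfrac{\epsilon}{\delta}\,v_0^2$ for every admissible pair $(\epsilon,\delta)$; taking $\delta=\epsilon$ yields exactly $t\,|A|^2\le v_0^2$ (and letting $\delta\uparrow2\epsilon$ even improves this to $\tfrac12 v_0^2$). I expect the main obstacle to be the justification of the maximum principle on the (a priori noncompact) moving surface $\Sigma_t$; this is precisely why the regularized quantity $\delta t\,|A|^2+\epsilon$ is used, and why the constraint $\delta<2\epsilon$ is imposed in Lemma~\ref{decay_codim_1} --- it is exactly what makes the reaction terms cancel. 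All remaining ingredients, namely the evolution equations for $v$, $|A|^2$ and $\delta t\,|A|^2+\epsilon$, have already been recorded above, so no further computation is needed.
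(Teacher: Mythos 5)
Your proposal is correct and follows essentially the same route as the paper: add the differential inequality of Lemma~\ref{decay_codim_1} for $\ln(\delta t|A|^2+\epsilon)$ to the exact evolution \eqref{eq_v^2} for $\ln(v^2)$, so the reaction terms $\pm2|A|^2$ cancel, then use the polarization identity $\tfrac12|\nabla a|^2-\tfrac12|\nabla b|^2=\tfrac12\langle\nabla(a-b),\nabla(a+b)\rangle$ to exhibit a gradient (drift) term and apply the maximum principle. The paper simply fixes $\delta=\epsilon=1$ and reads off $t|A|^2\le (t|A|^2+1)v^2\le v_0^2$; your version keeps $(\delta,\epsilon)$ general, extracts $t|A|^2\le\tfrac{\epsilon}{\delta}v_0^2$ via $v^2\ge1$, and then specializes --- the same argument, with the minor bonus that letting $\delta\uparrow 2\epsilon$ sharpens the constant to $\tfrac12 v_0^2$.
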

\begin{proof}
From the evolution equation of $v$, we have $sup_{\Sigma_t} v^2 \leq v_0^2$. Choosing $\varepsilon=1$ and $\delta=1$ in the previous Lemma and combining with equation \eqref{eq_v^2}, we derive
\[\begin{split}\heat \ln( (t|A|^2+1)v^2) &  \leq \frac{1}{2} |\nabla \ln( t   |A|^2+1)|^2-\frac{1}{2} |\nabla \ln(  v^2)|^2\\
&  \leq   \frac{1}{2}  \nabla \ln( \frac{ t|A|^2+1} {v^2}) \cdot \nabla \ln (( t|A|^2+1)v^2).\end{split}\]
The maximum principle implies
$$sup_{\Sigma_t} \left((t|A|^2+1)v^2\right) \leq sup_{\Sigma_0}v^2.$$
Therefore $t|A|^2  \leq (t|A|^2+1)v^2 \leq v_0^2$.
\end{proof}
\section{Estimates in higher codimensions}

Our  basic set-up here is a mean curvature flow
$F:\Sigma\times[0,T)\rightarrow  M$ of an $n$ dimensional
submanifold $\Sigma$ inside an $n+m$ dimensional flat Riemannian
manifold $M$. Given any  tensor on $M$, we may consider
the pull-back tensor by $F_t$ and consider the evolution equation
with respect to the time-dependent induced metric on
$F_t(\Sigma)=\Sigma_t$. For the purpose of applying the maximum
principle, it suffices to derive the equation at a space-time
point. We write all geometric quantities in terms of orthonormal
frames, keeping in mind all quantities are defined independent of
choices of frames. At any point $p\in \Sigma_t$, we choose any
orthonormal frames $\{e_i\}_{i=1,\cdots, n}$ for $T_p\Sigma_t$ and
$\{e_{\alpha}\}_{\alpha=n+1, \cdots, n+m}$ for the normal space $N_p\Sigma_t$. The
second fundamental form $h_{\alpha ij}$ is denoted by $h_{\alpha
ij} =\langle\nabla_{e_i}^M e_j, e_{\alpha}\rangle$ and the mean
curvature vector is denoted by $H_\alpha=\sum_ih_{\alpha ii}$. For
any $j,k$, we pretend
\[h_{n+i, jk}=0\] if $i>m$. Also we denote $|A|^2= \sum_{\alpha,i,j}h_{\alpha ij}^2$ and
$|H|^2= \sum_{\alpha} H_{\alpha}^2$.

First, we recall the evolution equations for $|H|^2$ and $|A|^2$.
The following proposition is taken from Corollary 3.8 and Corollary 3.9 from the survey paper \cite{sm2}.

\begin{pro} Suppose $M$ is a flat Riemannian manifold. 
For a mean curvature flow $F:\Sigma\times
[0, T)\rightarrow M$ of any dimension,
the quantities $|A|^2$ and $|H|^2$ satisfy the following
equations along the mean curvature flow:
\begin{equation}\label{|A|^2}
\begin{split}
& \frac{d}{dt}|A|^2
=  \Delta |A|^2 -2|\nabla^{\bot} A|^2\\
+ & 2\sum_{\alpha,\gamma, i,m}
(\sum_k h_{\alpha ik}h_{\gamma mk}
-h_{\alpha mk}h_{\gamma ik})^2
+2\sum_{i,j,m,k}(\sum_{\alpha} h_{\alpha ij}
h_{\alpha mk})^2
\end{split}
\end{equation}
and
\begin{equation}\label{|H|^2}
\begin{split}
\frac{d}{dt}|H|^2
&=\Delta |H|^2 -2|\nabla^{\bot} H|^2+ 2 \sum_{i,k}(\sum_{\alpha}H_{\alpha}h_{\alpha ik})^2.
\end{split}
\end{equation}

\end{pro}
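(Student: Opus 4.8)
The plan is to obtain both identities by contracting the evolution equation of the full second fundamental form $h_{\alpha ij}$, in direct analogy with the codimension-one evolution equation for $|A|^2$ recalled at the start of Section~2. First I would fix a space-time point $(p,t_0)$ and choose orthonormal frames $\{e_i\}$ for $T_p\Sigma_{t_0}$ and $\{e_\alpha\}$ for $N_p\Sigma_{t_0}$ that are normal at $p$, so that the tangential Christoffel symbols and the coefficients of the normal connection vanish there, and that are moreover propagated in time so that $\nabla^{\bot}_{\partial_t}e_\alpha=0$ at $(p,t_0)$. With such a frame every term involving a first derivative of the frame drops out, so the maximum-principle form of the equations can simply be read off at $(p,t_0)$.

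The core input is the higher-codimension analogue of Simons' identity for mean curvature flow. Starting from $\partial_t F=\vec H$ one computes $\frac{d}{dt}g_{ij}=-2\sum_\alpha H_\alpha h_{\alpha ij}$, then commutes $\nabla^{\bot}_{\partial_t}$ past two spatial covariant derivatives using the Codazzi equation together with the Ricci identities for both the tangential and the normal connection, and rewrites the ambient Riemann tensor via the Gauss and Ricci equations. Since $M$ is flat, every ambient-curvature contribution vanishes and one is left with an equation of the schematic shape
\[
\heat h_{\alpha ij}=Q_{\alpha ij}(h),
\]
with $Q_{\alpha ij}(h)$ a sum of terms cubic in the second fundamental form (one factor of each product being allowed to be the mean curvature). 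Contracting with $h_{\alpha ij}$, using $\tfrac12\Delta|A|^2=\sum_{\alpha,i,j}h_{\alpha ij}\Delta h_{\alpha ij}+|\nabla^{\bot}A|^2$, and adding the extra terms produced by $\frac{d}{dt}g^{ij}=2\sum_\alpha H_\alpha h_{\alpha ij}$ when $|A|^2$ is viewed as a metric contraction, one obtains $\tfrac12\heat|A|^2=-|\nabla^{\bot}A|^2+(\text{quartic terms})$; the remaining step is to regroup the quartic terms exactly into the two manifestly nonnegative sums appearing in \eqref{|A|^2} --- the first being the squared norm of the family of commutators $[S_\alpha,S_\gamma]$ of the shape operators $S_\alpha=(h_{\alpha ij})$, equivalently $|R^{\bot}|^2$ by the Ricci equation, and the second the squared norm of the symmetric pairing $\langle A_{ij},A_{mk}\rangle=\sum_\alpha h_{\alpha ij}h_{\alpha mk}$.

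Equation \eqref{|H|^2} then follows cheaply: tracing the identity for $h$ over $i=j$ and again using flatness collapses it to the clean relation $\heat H_\alpha=\sum_{\beta,i,k}H_\beta h_{\beta ik}h_{\alpha ik}$, the higher-codimension counterpart of the codimension-one formula $(\partial_t-\Delta)H=|A|^2H$; contracting this with $H_\alpha$ and using $\tfrac12\Delta|H|^2=\sum_\alpha H_\alpha\Delta H_\alpha+|\nabla^{\bot}H|^2$ gives $\tfrac12\heat|H|^2=-|\nabla^{\bot}H|^2+\sum_{i,k}\big(\sum_\alpha H_\alpha h_{\alpha ik}\big)^2$. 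I expect the only genuine difficulty to be bookkeeping rather than ideas: in higher codimension the normal-bundle Ricci identity contributes additional $R^{\bot}$ terms, the time-dependence of the induced metric contributes still more, and all of them must be carried along and then compressed into the perfect-square form above --- a dropped term would be fatal, but nothing is conceptually deep. Since the statement is quoted verbatim as Corollaries~3.8 and~3.9 of the survey \cite{sm2}, in the paper it suffices to cite that reference.
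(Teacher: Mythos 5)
Your proposal is correct and matches the paper, which offers no in-line derivation but simply cites Corollaries 3.8 and 3.9 of \cite{sm2}; your sketch accurately reproduces the standard Simons-type computation found in that reference, including the correct identifications of the two quartic terms (the squared norm of the shape-operator commutators, equivalently $|R^{\perp}|^2$ by the Ricci equation, and the squared norm of the normal pairing $\sum_\alpha h_{\alpha ij}h_{\alpha mk}$) and the correct metric-variation term from $\frac{d}{dt}g_{ij}=-2\sum_\alpha H_\alpha h_{\alpha ij}$.
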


Using Theorem 1 from \cites{LiLi92}, we have
$$2\sum_{\alpha,\gamma, i,m}
(\sum_k h_{\alpha ik}h_{\gamma mk}
-h_{\alpha mk}h_{\gamma ik})^2
+2\sum_{i,j,m,k}(\sum_{\alpha} h_{\alpha ij}
h_{\alpha mk})^2 \leq 3 |A|^4.$$ (This improves the prior bound of $4|A|^4$ used in \cite{wa1}).
Using
$$2 \sum_{i,k}(\sum_{\alpha}H_{\alpha}h_{\alpha ik})^2 \leq 2 |A|^2|H|^2,$$
we obtain the next lemma.

\begin{lem}\label{Aineq} Suppose $M$ is a flat Riemannian manifold. 
For a mean curvature flow $F:\Sigma\times
[0, T)\rightarrow M$ of any dimension,
we have the following differential inequalities for $|A|^2$ and $|H|^2$.
\begin{equation}\begin{split}
\heat |A|^2 &\leq -2|\nabla^\perp  A|^2 + 3 |A|^4,\\
\heat |H|^2 &\leq -2|\nabla^\perp  H|^2 + 2 |A|^2|H|^2.
\end{split}\end{equation}\end{lem}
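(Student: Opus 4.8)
The plan is to derive both inequalities directly from the exact evolution equations \eqref{|A|^2} and \eqref{|H|^2} of the preceding proposition by estimating their zeroth-order reaction terms pointwise; since $M$ is flat, no ambient curvature terms enter, so this suffices. Throughout I write $\heat=\frac{d}{dt}-\Delta$, so each evolution equation splits into the diffusion term $\Delta(\cdot)$, the favorable negative gradient term $-2|\nabla^\perp(\cdot)|^2$, and a reaction term that is the only thing left to bound.

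First I would treat $|H|^2$. The reaction term in \eqref{|H|^2} is $2\sum_{i,k}\big(\sum_\alpha H_\alpha h_{\alpha ik}\big)^2$. For each fixed pair $(i,k)$, Cauchy--Schwarz in the normal index $\alpha$ gives $\big(\sum_\alpha H_\alpha h_{\alpha ik}\big)^2\le\big(\sum_\alpha H_\alpha^2\big)\big(\sum_\alpha h_{\alpha ik}^2\big)=|H|^2\sum_\alpha h_{\alpha ik}^2$. Summing over $i,k$ yields $\sum_{i,k}\big(\sum_\alpha H_\alpha h_{\alpha ik}\big)^2\le|H|^2|A|^2$, hence $\heat|H|^2\le-2|\nabla^\perp H|^2+2|A|^2|H|^2$.

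For $|A|^2$, the reaction term in \eqref{|A|^2} is $R:=2\sum_{\alpha,\gamma,i,m}\big(\sum_k h_{\alpha ik}h_{\gamma mk}-h_{\alpha mk}h_{\gamma ik}\big)^2+2\sum_{i,j,m,k}\big(\sum_\alpha h_{\alpha ij}h_{\alpha mk}\big)^2$, and the claim reduces to the sharp algebraic inequality $R\le 3|A|^4$. A crude estimate $R\le 4|A|^4$ follows by applying Cauchy--Schwarz to the two sums separately (as in \cite{wa1}), but the improved constant $3$ is exactly Theorem~1 of Li--Li \cite{LiLi92}, which I would invoke directly. Granting it, \eqref{|A|^2} reads $\heat|A|^2=-2|\nabla^\perp A|^2+R\le-2|\nabla^\perp A|^2+3|A|^4$, which is the first asserted inequality.

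The main obstacle is the sharp bound $R\le 3|A|^4$: it is the only nonelementary ingredient. A self-contained argument would require splitting the second fundamental form into its trace-free part and the mean-curvature part and carefully controlling the normal-curvature contribution against $|A|^4$ --- this is precisely the content of Li--Li's theorem, so I would simply cite it. Everything else is Cauchy--Schwarz applied to the explicit reaction terms together with moving the Laplacian to the left via $\heat$.
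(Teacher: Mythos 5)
Your proposal matches the paper's own argument exactly: both derive the inequalities from the exact evolution equations \eqref{|A|^2} and \eqref{|H|^2}, bound the $|H|^2$ reaction term by Cauchy--Schwarz in the normal index to get $2|A|^2|H|^2$, and invoke Theorem~1 of Li--Li \cite{LiLi92} for the sharp bound $R\le 3|A|^4$ on the $|A|^2$ reaction term (improving the elementary $4|A|^4$ estimate). There is nothing to add; the proof is correct and is the same one the paper gives.
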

We note that the term $3|A|^4$ in the first equation is different from the term $2|A|^4$ in the codimension one case. It cannot be improved unless the normal bundle is flat. This creates a major difficulty in attempting to generalize the codimension one estimate to the higher codimension case.

In the following, we derive differential inequalities for various geometric quantities which will be used for curvature decay estimates in \S 4.

\begin{lem}\label{Adecay} Suppose $M$ is a flat Riemannian manifold. 
For a mean curvature flow $F:\Sigma\times
[0, T)\rightarrow M$ of an $n$-dimensional submanifold $\Sigma$.
Given any $\epsilon >0$ and $0<\delta \leq \frac{2\epsilon}{n}$, we have the following differential inequalities
\begin{equation}\begin{split}\heat \ln (\delta t|H|^2 + \epsilon) &\leq 2|A|^2 + \frac{|\nabla \ln (\delta t|H|^2 + \epsilon)|^2}{2},\\
\heat \ln (\delta t|A|^2 + \epsilon)& \leq 3|A|^2 + \frac{|\nabla \ln (\delta t|A|^2 + \epsilon)|^2}{2}.\end{split}\end{equation}
\end{lem}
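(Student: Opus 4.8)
The plan is to imitate the computation in Lemma \ref{decay_codim_1}, replacing the codimension-one evolution inequalities for $|A|^2$ by the ones from Lemma \ref{Aineq}, and keeping careful track of the extra factor coming from the $n$-dimensional mean curvature. I will do the two inequalities essentially in parallel. Write $\phi=\delta t|H|^2+\epsilon$ (resp.\ $\phi=\delta t|A|^2+\epsilon$) and start from the identity
\[
\heat\ln\phi=\frac{1}{\phi}\heat\phi+|\nabla\ln\phi|^2,
\]
which holds because $\heat\ln\phi=\frac{1}{\phi}\heat\phi-\frac{|\nabla\phi|^2}{\phi^2}$ and $|\nabla\ln\phi|^2=\frac{|\nabla\phi|^2}{\phi^2}$.

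For the $|H|^2$ inequality, I would use $\heat\phi=\delta|H|^2+\delta t\,\heat|H|^2\le \delta|H|^2+\delta t(-2|\nabla^\perp H|^2+2|A|^2|H|^2)$ from Lemma \ref{Aineq}. The term $|A|^2|H|^2$ is handled by writing $\delta t|H|^2=\phi-\epsilon$, so that $2\delta t|A|^2|H|^2=2|A|^2(\phi-\epsilon)$ and hence $\frac{1}{\phi}\cdot 2\delta t|A|^2|H|^2=2|A|^2-\frac{2\epsilon|A|^2}{\phi}$. The leftover $\frac{\delta|H|^2}{\phi}$ is absorbed using the hypothesis: since $|H|^2\le n|A|^2$ (because $H_\alpha=\sum_i h_{\alpha ii}$, so by Cauchy–Schwarz $H_\alpha^2\le n\sum_i h_{\alpha ii}^2\le n\sum_{i,j}h_{\alpha ij}^2$), we get $\frac{\delta|H|^2-2\epsilon|A|^2}{\phi}\le\frac{(\delta n-2\epsilon)|A|^2}{\phi}\le 0$ precisely when $\delta\le\frac{2\epsilon}{n}$. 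The gradient term is controlled as in the codimension-one proof: the Kato-type inequality $|\nabla^\perp H|^2\ge|\nabla|H||^2=\frac{|H|^2}{4}|\nabla\ln|H|^2|^2$ (valid where $|H|\neq 0$) together with $|\nabla\phi|=\delta t|\nabla|H|^2|$ gives
\[
-\frac{1}{\phi}\cdot 2\delta t|\nabla^\perp H|^2+|\nabla\ln\phi|^2
\le -\frac{1}{2}\frac{\delta t|\nabla|H|^2|^2}{\phi\,|H|^2}+\frac{\delta^2t^2|\nabla|H|^2|^2}{\phi^2}
=\frac{\delta t|\nabla|H|^2|^2}{\phi}\Bigl(\frac{\delta t}{\phi}-\frac{1}{2|H|^2}\Bigr)\le\frac{1}{2}|\nabla\ln\phi|^2,
\]
where the last step uses $\delta t|H|^2\le\phi$, hence $\frac{\delta t}{\phi}\le\frac{1}{|H|^2}$, so the quantity in parentheses is $\le\frac{1}{2|H|^2}$; rewriting that bound back in terms of $\frac{\delta^2t^2|\nabla|H|^2|^2}{\phi^2}=|\nabla\ln\phi|^2$ yields the claimed $\tfrac12|\nabla\ln\phi|^2$. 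At points where $|H|=0$ the inequality is trivial since then $\heat\ln\phi$ reduces to $\frac{\delta|H|^2}{\phi}+\dots$ with the bad terms absent (one argues by approximation or notes $\nabla|H|^2=0$ there). Collecting terms gives $\heat\ln(\delta t|H|^2+\epsilon)\le 2|A|^2+\frac12|\nabla\ln(\delta t|H|^2+\epsilon)|^2$.

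The $|A|^2$ inequality is identical in structure, using $\heat|A|^2\le-2|\nabla^\perp A|^2+3|A|^4$: here $\heat\phi\le\delta|A|^2+\delta t(3|A|^4-2|\nabla^\perp A|^2)$, and since $3\delta t|A|^4=3|A|^2(\phi-\epsilon)$ the reaction term becomes $3|A|^2-\frac{3\epsilon|A|^2}{\phi}$; now the leftover is $\frac{\delta|A|^2-3\epsilon|A|^2}{\phi}$, which is $\le 0$ whenever $\delta\le 3\epsilon$, and in particular whenever $\delta\le\frac{2\epsilon}{n}$ with $n\ge 1$ (indeed $\frac{2\epsilon}{n}\le 2\epsilon\le 3\epsilon$). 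The gradient term is treated by the same Kato inequality $|\nabla^\perp A|^2\ge|\nabla|A||^2=\frac{|A|^2}{4}|\nabla\ln|A|^2|^2$ and the same algebraic manipulation with $\delta t|A|^2\le\phi$, giving $-\frac{2\delta t|\nabla^\perp A|^2}{\phi}+|\nabla\ln\phi|^2\le\frac12|\nabla\ln\phi|^2$. This yields the second inequality.

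I do not expect a serious obstacle: the only delicate points are the Kato inequality (which is standard, and needs a word about the locus $|H|=0$ or $|A|=0$, handled by continuity/approximation) and the bookkeeping that turns $\delta t|H|^2|A|^2$ (resp.\ $\delta t|A|^4$) into the clean $2|A|^2$ (resp.\ $3|A|^2$) plus a manifestly nonpositive remainder using the constraint on $\delta$. The role of the hypothesis $\delta\le\frac{2\epsilon}{n}$ is exactly to kill the $\frac{\delta|H|^2}{\phi}$ term via $|H|^2\le n|A|^2$ in the first inequality; the second inequality needs only the weaker $\delta\le 3\epsilon$, so the stated hypothesis is more than enough for it.
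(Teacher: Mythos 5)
Your argument follows the paper's proof essentially step by step: the same identity $\heat\ln\phi=\frac{1}{\phi}\heat\phi+|\nabla\ln\phi|^2$, the same substitution $\delta t|H|^2=\phi-\epsilon$ to peel off $2|A|^2$, the same use of $|H|^2\le n|A|^2$ to make the leftover nonpositive under $\delta\le\tfrac{2\epsilon}{n}$, and the same Kato inequality to control the gradient terms. Your observation that the $|A|^2$ inequality only requires $\delta\le3\epsilon$ (because the reaction term is $3|A|^4$) is correct and is a point the paper leaves implicit when it says the second inequality is ``proved in a similar fashion.''

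There is, however, a flaw in one sentence of your gradient-term estimate. After factoring, you correctly arrive at
$\frac{\delta t|\nabla|H|^2|^2}{\phi}\bigl(\frac{\delta t}{\phi}-\frac{1}{2|H|^2}\bigr)$
and correctly assert it is $\le\tfrac12|\nabla\ln\phi|^2$, but the justification offered --- bound the parenthesis by $\tfrac{1}{2|H|^2}$ via $\tfrac{\delta t}{\phi}\le\tfrac{1}{|H|^2}$ and then ``rewrite that bound back'' --- does not produce the claim. Multiplying the prefactor by $\tfrac{1}{2|H|^2}$ gives $\frac{\delta t|\nabla|H|^2|^2}{2\phi|H|^2}$, which is $\ge\tfrac12|\nabla\ln\phi|^2=\frac{\delta^2t^2|\nabla|H|^2|^2}{2\phi^2}$ (not $\le$), precisely because $\tfrac{1}{|H|^2}\ge\tfrac{\delta t}{\phi}$. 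You must instead apply the inequality to the other term in the parenthesis: since $\tfrac{1}{2|H|^2}\ge\tfrac{\delta t}{2\phi}$, we get
$\frac{\delta t}{\phi}-\frac{1}{2|H|^2}\le\frac{\delta t}{\phi}-\frac{\delta t}{2\phi}=\frac{\delta t}{2\phi}$,
and multiplying by the prefactor yields exactly $\frac{\delta^2t^2|\nabla|H|^2|^2}{2\phi^2}=\tfrac12|\nabla\ln\phi|^2$. This is precisely the inequality the paper uses, stated there as $-\frac{\delta t|\nabla|H|^2|^2}{2\phi|H|^2}+\tfrac12|\nabla\ln\phi|^2\le0$. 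With this one correction your proof is complete and coincides with the paper's.
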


\begin{proof}

From Lemma \ref{Aineq}, we have
\begin{equation*}
\begin{split}\heat \ln(|H|^2) =  & \frac{1}{|H|^2}  \heat |H|^2  + |\nabla  \ln |H|^2|^2\\
\leq  & \frac{1}{|H|^2}\Big(2|A|^2|H|^2- 2|\nabla |H||^2\Big)+ |\nabla  \ln |H|^2|^2\\
\leq  & 2|A|^2  + \frac{1}{2} |\nabla \ln |H|^2|^2.
\end{split}
\end{equation*}
In the last step, we have used $ -\frac{2|\nabla |H||^2}{|H|^2}= -\frac{|\nabla  \ln |H|^2|^2}{2}$.

Using $\heat (\delta t|H|^2+\epsilon)=\delta |H|^2+ \delta t \heat |H|^2$ and Lemma \ref{Aineq}, we  compute the evolution equation of $\ln(\delta t|H|^2+\epsilon)$ to obtain
\begin{equation*}
\begin{split} & \heat \ln(\delta t|H|^2+\epsilon) \\
=  & \frac{1}{\delta t|H|^2+\epsilon} \heat (\delta t|H|^2+\epsilon)  +|\nabla \ln(\delta t |H|^2+\epsilon)|^2\\
\leq  & \frac{1}{\delta t|H|^2+\epsilon}\Big(\delta |H|^2 + \delta t( 2 |A|^2|H|^2- 2|\nabla |H||^2) \Big)+|\nabla \ln(\delta t |H|^2+\epsilon)|^2 \\
= &  \frac{\delta |H|^2+2(\delta t|H|^2+\epsilon) |A|^2-2\epsilon |A|^2}{\delta t|H|^2+\epsilon} -\frac{ 2 \delta t|\nabla |H||^2}{\delta t|H|^2+\epsilon} +|\nabla \ln(\delta t |H|^2+\epsilon)|^2\\
=  & 2|A|^2+ \frac{\delta |H|^2-2\epsilon |A|^2}{\delta t|H|^2+\epsilon}  -\frac{ \delta t|\nabla |H|^2|^2}{2(\delta t |H|^2+\epsilon)|H|^2} +|\nabla \ln(\delta t |H|^2+\epsilon)|^2.
\end{split}
\end{equation*}
In the last step, we have used $|\nabla |H||^2=\frac{|\nabla |H|^2|^2}{4|H|^2}$.
Since $|H|^2 \leq n |A|^2$ and
$-\frac{ \delta t|\nabla |H||^2}{2(\delta t |H|^2+\epsilon)|H|^2} +
\frac{1}{2}|\nabla \ln(\delta t |H|^2+\epsilon)|^2 \leq 0$
 , we can choose $\delta \leq \frac{2\epsilon}{n}$ and get
\begin{equation*} \heat \ln(\delta t|H|^2+\epsilon)
\leq  2|A|^2  + \frac{1}{2} |\nabla \ln(\delta t|H|^2+\epsilon)|^2.
\end{equation*}

The rest of the Lemma can be proved in a similar fashion to Lemma \ref{decay_codim_1}.
\end{proof}

To derive an a priori curvature estimate, we need to find the right geometric quantity to counteract the quadratic growth of the second fundamental forms in Lemma \ref{Adecay}. When $n=2$, we are able to find the right quantity to establish the a prioi mean curvature estimate.

In \cite{tw}, a parallel symmetric two tensor $S$ is introduced
to study the area decreasing map. We first recall some basic notations and definitions from Section 3 and Section 4 in \cite{tw}.

 When $M=\Sigma_1\times \Sigma_2$ is the product of $\Sigma_1$ and $\Sigma_2$, we denote the projections by
$\pi_1:M\rightarrow \Sigma_1$ and $\pi_2:M\rightarrow \Sigma_2$.
By abusing notations, we also denote the differentials by
$\pi_1:T_p M\rightarrow T_{\pi_1(p)}\Sigma_1$ and $\pi_1:T_p
M\rightarrow T_{\pi_2(p)}\Sigma_2$ at any  point $p\in M$.

When $\Sigma$ is the graph of $f:\Sigma_1\rightarrow \Sigma_2$,
the equation at each point can be written in terms of the singular
values of $df$ and special bases adapted to $df$. Denote the
singular values of $df$, or eigenvalues of $\sqrt{(df)^T df}$, by
$\{\lambda_i\}_{i=1,\cdots, n}$. Let $r$ denote the rank of $df$.
 We can rearrange them so that
$\lambda_i=0$ when $i$ is greater than $r$. By singular value
decomposition, there exist orthonormal bases $\{a_i\}_{i=1, \cdots,
n}$ for $T_{\pi_1(p)}\Sigma_1$ and $\{a_{\alpha}\}_{
\alpha=n+1,\cdots, n+m}$ for $T_{\pi_2(p)} \Sigma_2$ such that
\[df(a_i)=\lambda_i a_{n+i}\,\,\] for $i$ less than or equal to $r$
and $df(a_i)=0$ for $i$ greater than $r $. Moreover,
\begin{equation}\label{onf1_1} e_i=
\begin{cases} \frac{1}{\sqrt{1+\lambda_i^2}}(a_i+\lambda_i
a_{n+i})& \text{if}\,\, {1 \le i \le r   }\\
 a_i & \text{if}\,\, {r+1 \le i \le n   }
 \end{cases}
\end{equation}
 becomes an orthonormal basis for $T_p\Sigma$
 and
 \begin{equation}\label{onf1_3}
e_{n+p} =
\begin{cases}
\frac{1}{\sqrt{1+\lambda_p^2}}(a_{n+p} -\lambda_p
a_{p})& \text{if}\,\,1 \le p \le r\\
a_ {n+p}& \text{if}\,\,r+1 \le p \le m
\end{cases}
\end{equation}
becomes an orthonormal basis for $N_p\Sigma$.

The
tangent space of $M=\Sigma_1 \times \Sigma_2$ is identified with
$T\Sigma_1 \oplus T\Sigma_2$. Let $\pi_1$ and $\pi_2$ denote the
projection onto the first and second summand in the splitting. We
define the parallel symmetric two-tensor $S$  by
\begin{equation}\label{sym}
S(X,Y)=\langle \pi_1(X), \pi_1(Y)\rangle\ -\langle \pi_2(X),
\pi_2(Y)\rangle
\end{equation}
 for any
 $X, Y \in TM $.

Let $\Sigma$ be the graph of $f:\Sigma_1\rightarrow \Sigma_1
\times \Sigma_2$.  $S$ restricts to a symmetric two-tensor on
$\Sigma$ and we can represent $S$ in terms of the orthonormal
basis (\ref{onf1_1}).

Let $r$ denote the rank of $df$. By (\ref{onf1_1}), it is not hard
to check
\begin{equation}\label{pi1}
\begin{split}
\pi_1(e_i) & = \ \frac{a_i} {\sqrt{1+{\lambda_i}^2}} \; ,
 \pi_2(e_i)= \ \frac{\lambda_ia_{n+i}} {\sqrt{1+{\lambda_i}^2}}
\,\, \
\text{for}\,\,  1 \le i \le r \,\, , \\
\text{and} \ \pi_1(e_i) & = \ a_i \;,
 \pi_2(e_i)= \ 0
\,\, \text{for}\,\,  r+1 \le i \le n.
\end{split}
\end{equation}
Similarly, by (\ref{onf1_3}) we have
 \begin{equation}\label{pi1_1}
\begin{split}
\pi_1(e_{n+p}) &= \ \frac{-\lambda_p a_p}
{\sqrt{1+{\lambda_p}^2}}\; , \pi_2(e_{n+p}) = \ \frac{a_{n+p}}
{\sqrt{1+{\lambda_p}^2}}\,\,
\text{for}\,\,  1 \le p \le r\;,\,\, \\
\,\, \text{and}\,\, \pi_1(e_{n+p}) &= 0 \; , \pi_2(e_{n+p}) =
a_{n+p}  \,\, \text{for}\,\,r+1 \le p \le m\; .
\end{split}
\end{equation}
From the definition of $S$, we have
\begin{equation}\label{S_ij}
\begin{split}
 S(e_i, e_j)
 =  \frac{1- \lambda_i^2}{1+{\lambda_i}^2} \delta_{ij}\; .
\end{split}
\end{equation}
In particular, the eigenvalues of $S$ are
\begin{equation}\label{eigenvalues}
 \frac{1-{\lambda_i}^2 }{1+{\lambda_i}^2}, \,i= 1, \cdots, n.
\end{equation}

Now,  at each point we express $S$ in terms of the orthonormal
basis $\{{e_i}\}_{i = 1, \cdots, n}$ and  $\{{e_{\alpha}}\}_{\alpha
= n+1, \cdots, n+m}$. Let $I_{k \times k}$ denote a $k$ by $k$
identity matrix. Then $S$ can be written in the block form
\begin{equation}\label{block}
 S = \Big(
S(e_k,e_l) \Big)_{ 1 \le k,l \le {n+m}}=
 \left(
  \begin{matrix}
 B & 0 & D & 0 \\
0 & I_{n-r \times n-r} & 0 & 0 \\
D & 0 & -B & 0 \\
0 & 0 &0 &  - I_{m-r \times m-r}
\end{matrix}
\right)
\end{equation}
where $B$ and $D$ are  $r$ by $r$ matrices with $ B_{ij}=
S(e_i,e_j) = \frac{1-\lambda_i^2}{1+\lambda_i^2}\delta_{ij}$ and $
D_ {ij}= S(e_i,e_{n+j}) = \frac{-2\lambda_i}{1+\lambda_i^2}
\delta_{ij} $ for  $1 \le i, j \le r$.

Next we recall the evolution equation of parallel two-tensors from
\cite{sw1}.
 Given a parallel two-tensor $S$ on
$M$, we consider the evolution of $S$ restricted to $\Sigma_t$.
This is a family of time-dependent symmetric two tensors on
$\Sigma_t$.

\begin{pro}\label{evo_sym_l} Suppose $M$ is a flat Riemannian manifold. 
For a mean curvature flow $F:\Sigma\times
[0, T)\rightarrow M$ of an $n$-dimensional submanifold.
Let $S$ be a parallel two-tensor on $M$.   Then the pull-back of
$S$ to $\Sigma_t$ satisfies the following equation.
\begin{equation}\label{evo_symb}
\begin{split}
\heat S_{ij}&= - h_{\alpha i l}H_{\alpha}S_{lj}
- h_{\alpha j l}H_{\alpha}S_{li} \\
& + R_{k i k \alpha}S_{\alpha j} +  R_{k j k\alpha }S_{\alpha i}\\
& +h_{\alpha k l}h_{\alpha k i}S_{lj}
  +h_{\alpha k l}h_{\alpha k j}S_{li}
-2h_{\alpha k i} h_{\beta k j}S_{\alpha \beta}
\end{split}
\end{equation}
 where $\Delta $
is the rough Laplacian on two-tensors over $\Sigma_t$ and
$S_{\alpha i}=S(e_\alpha, e_i)$, $S_{\alpha \beta}=S(e_{\alpha},
e_{\beta})$, and $R_{k i k \alpha}=R(e_k, e_i, e_k, e_\alpha)$ is
the curvature of $M$.
\end{pro}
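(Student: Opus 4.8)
The plan is a direct pointwise computation at a fixed space-time point, combining the mean curvature flow equation with the Gauss and Weingarten relations, the Codazzi equation, and the hypothesis $\barbla S=0$. It is convenient to fix $p\in\Sigma_t$ and work in orthonormal frames $\{e_i\}$ of $T_p\Sigma_t$ and $\{e_\alpha\}$ of $N_p\Sigma_t$ that are normal at $p$ (i.e.\ $\nabla e_i=0$ and $\nabla^\perp e_\alpha=0$ at $p$); since every identity below is tensorial, it suffices to verify it at such a point.

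First I would extract the two first-order consequences of $\barbla S=0$. Expanding $e_k\bigl(S(e_i,e_j)\bigr)$ once through $\barbla$ on $M$ — using Gauss $\barbla_{e_k}e_i=\nabla_k e_i+h_{\alpha k i}e_\alpha$ — and once through the induced connection on $\Sigma_t$, and doing the same for $S(e_\alpha,e_i)$ using Weingarten $\barbla_{e_k}e_\alpha=\nabla^\perp_k e_\alpha-h_{\alpha k l}e_l$, one obtains
\[
\nabla_k S_{ij}=h_{\alpha k i}\,S_{\alpha j}+h_{\alpha k j}\,S_{\alpha i},\qquad
\nabla_k S_{\alpha i}=-h_{\alpha k l}\,S_{l i}+h_{\beta k i}\,S_{\alpha\beta}.
\]

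Next come the two halves of $\heat S_{ij}$. For the time derivative, $\frac{d}{dt}F=\vec H=\sum_\alpha H_\alpha e_\alpha$ together with $\barbla S=0$ gives $\frac{d}{dt}S_{ij}=S(\barbla_{e_i}\vec H,e_j)+S(e_i,\barbla_{e_j}\vec H)$, and decomposing $\barbla_{e_i}\vec H=\sum_\beta(\nabla^\perp_i H_\beta)e_\beta-\sum_{\alpha,l}H_\alpha h_{\alpha i l}e_l$ yields
\[
\frac{d}{dt}S_{ij}=\nabla^\perp_i H_\alpha\,S_{\alpha j}+\nabla^\perp_j H_\alpha\,S_{\alpha i}-H_\alpha h_{\alpha i l}S_{lj}-H_\alpha h_{\alpha j l}S_{li}.
\]
For the rough Laplacian I differentiate the first identity once more, $\Delta S_{ij}=\sum_k\nabla_k\nabla_k S_{ij}=\sum_k\nabla_k\bigl(h_{\alpha k i}S_{\alpha j}+h_{\alpha k j}S_{\alpha i}\bigr)$, apply Codazzi in the form $\sum_k\nabla_k h_{\alpha k i}=\nabla^\perp_i H_\alpha-\sum_k R_{k i k\alpha}$, substitute the second first-order identity for $\nabla_k S_{\alpha j}$, and symmetrise the $S_{\alpha\beta}$-terms in $\alpha,\beta$; this produces
\[
\Delta S_{ij}=\nabla^\perp_i H_\alpha S_{\alpha j}+\nabla^\perp_j H_\alpha S_{\alpha i}-R_{kik\alpha}S_{\alpha j}-R_{kjk\alpha}S_{\alpha i}-h_{\alpha kl}h_{\alpha ki}S_{lj}-h_{\alpha kl}h_{\alpha kj}S_{li}+2h_{\alpha ki}h_{\beta kj}S_{\alpha\beta}.
\]
Subtracting this from $\frac{d}{dt}S_{ij}$, the $\nabla^\perp H$-terms cancel and the remainder reassembles into exactly \eqref{evo_symb}. (For the flat ambient spaces relevant here $R\equiv0$, so the curvature terms drop; they are retained only to match the general statement in \cite{sw1}.)

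I expect the one real source of error to be the bookkeeping when iterating the covariant derivative for $\Delta S_{ij}$: one must keep straight that $S_{\alpha i}$ carries the normal connection on the index $\alpha$ and the induced connection on the index $i$, and one must use the correct sign and index order in the Codazzi equation — this is precisely what generates the terms $R_{kik\alpha}S_{\alpha j}+R_{kjk\alpha}S_{\alpha i}$, which would be easy to lose. Everything else is routine contraction, relabeling, and use of the symmetry of $S$.
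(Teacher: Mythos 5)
Your computation is correct and is the standard direct derivation: the paper itself does not reprove this proposition but simply quotes it from Smoczyk--Wang~\cite{sw1}, where exactly this kind of frame computation (Gauss/Weingarten to get $\nabla_k S_{ij}$ and $\nabla_k S_{\alpha i}$, Codazzi to convert $\nabla_k\nabla_k S_{ij}$, and $\barbla_{\partial_t}F_*\partial_i=\barbla_{e_i}\vec H$ for the time derivative) is carried out. Both first-order identities check out, the $\nabla^\perp H$ terms cancel between $\tfrac{d}{dt}S_{ij}$ and $\Delta S_{ij}$, and the reaction terms reassemble exactly into~\eqref{evo_symb}.

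Two small remarks. First, the sign you wrote in the traced Codazzi identity, $\sum_k\nabla_k h_{\alpha ki}=\nabla^\perp_i H_\alpha-\sum_k R_{kik\alpha}$, depends on the curvature sign convention; with the convention $R(X,Y,Z,W)=\langle(\barbla_X\barbla_Y-\barbla_Y\barbla_X-\barbla_{[X,Y]})Z,W\rangle$ one gets a $+$ sign there, and the $R$-terms in the final formula would flip accordingly. Since the paper never fixes this convention and immediately sets $M$ flat so that $R\equiv0$, this is immaterial here, and you flag it appropriately. Second, the phrase ``since every identity below is tensorial, it suffices to verify at such a point'' needs a word of care for the $\tfrac{d}{dt}$ part: the point is that one fixes a coordinate chart on $\Sigma$ once and for all, so $\tfrac{d}{dt}S_{ij}=\tfrac{d}{dt}S(F_*\partial_i,F_*\partial_j)$ is unambiguous, and one may then choose the chart so that $\partial_i|_{(p,t_0)}$ is an orthonormal, $\nabla$-normal frame at the single space-time point $(p,t_0)$ where the identity is being checked; you implicitly do exactly this, so the argument is sound.
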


The  evolution equations (\ref{evo_symb}) of $S$ can be written in
terms of  evolving orthonormal frames as in Hamilton \cite{ha3}.
If the orthonormal frames
\begin{equation}\label{onf}
F=\{F_1, \cdots, F_a, \cdots,F_{n}\}
\end{equation}
are given in local coordinates by
\[
F_a = F_a^i \frac{\partial }{\partial x_i}\; .
\]
To keep them orthonormal, i.e. $ g_{ij}F_a^iF_b^j = \delta_{ab}$,
we evolve $F$ by the formula
\[
\frac{\partial }{\partial t} F_a^i = g^{ij}g^{\alpha\beta}
h_{\alpha j l}H_{\beta}F_a^l \, .
\]

Let $S_{ab} = S_{ij}F_a^iF_b^j$ be the components of $S$ in $F$.
Then $S_{ab}$ satisfies the following equation
\begin{equation}\label{onframes}
\begin{split}
\heat S_{ab}&=  \
 R_{c a c \alpha}S_{\alpha b} +  R_{c b c \alpha}S_{\alpha a}\\
& \ +h_{\alpha c d}h_{\alpha c a}S_{db}
  +h_{\alpha c d}h_{\alpha c b}S_{da} \\
& \ -2h_{\alpha c a} h_{\beta c b}S_{\alpha \beta} \, .
\end{split}
\end{equation}

In the following, we will compute the evolution of $Tr(S)=\sum_{i,j}g^{ij}S_{ij}$ in the case $\Sigma_1, \Sigma_2$ are flat Riemann surfaces, i.e. where the curvature tensor $R=0$. By equation \eqref{S_ij},
$Tr(S)=\frac{1-\lambda_1^2}{1+\lambda_1^2}+\frac{1-\lambda_2^2}{1+\lambda_2^2}
=\frac{2(1-\lambda_1^2\lambda_2^2)} {(1+\lambda_1^2)(1+\lambda_2^2)}$. The next proposition gives a new proof that the area
decreasing condition is preserved along the mean curvature. In addition, the equation satisfied by $\ln Tr(S)$ plays a critical
role in next section's curvature estimates.

\begin{pro}\label{etrS} Suppose $M=\Sigma_1\times \Sigma_2$ where $\Sigma_1$ and $\Sigma_2$ are complete flat Riemann surfaces. Suppose $\Sigma$ is the graph of a map $f:\Sigma_1\to \Sigma_2$
and let $\Sigma_t$ denote its mean curvature flow with initial surface
$\Sigma_0=\Sigma$.  Let $S$ be given by \eqref{sym}, then $Tr(S)$ on $\Sigma_t$ satisfies the following equation
\begin{equation}\label{lnTrS}
\begin{split}
& \heat  \ln Tr(S) \\
 = & 2|A|^2  +\frac{ |\nabla \ln(TrS)|^2}{2}+\frac{ 2\sum_{c=1}^2(T_{11}h_{4c2}+T_{22}h_{3c1})^2  }{Tr(S)^2},
\end{split}
\end{equation}
\end{pro}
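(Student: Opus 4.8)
The plan is to start from the general evolution equation \eqref{onframes} for the components $S_{ab}$ in an evolving orthonormal frame, specialize to the two-dimensional flat case ($n=m=2$, $R=0$), and diagonalize $S$ at the space-time point under consideration. At such a point the tangential block of $S$ is $\operatorname{diag}(T_{11},T_{22})$ with $T_{ii}=\frac{1-\lambda_i^2}{1+\lambda_i^2}$, the mixed block is $D_{ij}=\frac{-2\lambda_i}{1+\lambda_i^2}\delta_{ij}$, and the normal block is $-\operatorname{diag}(T_{11},T_{22})$. Then $Tr(S)=T_{11}+T_{22}$, and I would compute $\heat Tr(S)=g^{ij}\heat S_{ij}$ (the metric terms drop out because we work in the evolving orthonormal frame, so $\heat$ applied to $g^{ab}=\delta_{ab}$ contributes nothing). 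Summing \eqref{onframes} over $a=b$ gives
\[
\heat Tr(S)=2\sum_{c,d,a}h_{\alpha c d}h_{\alpha c a}S_{da}-2\sum_{c,a}h_{\alpha c a}h_{\beta c a}S_{\alpha\beta},
\]
and the first term is $2\sum_{\alpha,c,a,d}h_{\alpha cd}h_{\alpha ca}S_{da}$ while the second involves the normal block $S_{\alpha\beta}=-T_{(\alpha-2)(\alpha-2)}\delta_{\alpha\beta}$.

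Next I would feed this into $\heat \ln Tr(S)=\frac{1}{Tr(S)}\heat Tr(S)+|\nabla \ln Tr(S)|^2$ and reorganize. The target identity says the right-hand side is exactly $2|A|^2+\tfrac12|\nabla\ln Tr(S)|^2+\tfrac{2}{Tr(S)^2}\sum_{c=1}^2(T_{11}h_{4c2}+T_{22}h_{3c1})^2$. So I need two things: first, that the gradient term bookkeeping produces $+\tfrac12|\nabla\ln Tr(S)|^2$ rather than $+|\nabla\ln Tr(S)|^2$ — this should come from a Kato-type inequality or, more likely here since $Tr(S)$ is not a norm, from the fact that the full rough-Laplacian of $S$ when restricted to the trace and diagonalized leaves over a $-\tfrac12|\nabla\ln Tr(S)|^2$ that I have suppressed by writing only \eqref{onframes} (which already has the Laplacian moved to the left); I would need to carefully track the $\Delta \ln Tr(S)$ versus $\frac{\Delta Tr(S)}{Tr(S)}$ discrepancy, which contributes $-\frac{|\nabla Tr(S)|^2}{Tr(S)^2}=-|\nabla\ln Tr(S)|^2$, and see that combining with the $+|\nabla\ln Tr(S)|^2$ from the product rule and using convexity/the structure of the reaction terms yields the stated $+\tfrac12$. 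Second, and this is the real content, I must show the purely algebraic reaction terms collapse to $2|A|^2\,Tr(S)$ plus the single nonnegative correction $2\sum_{c}(T_{11}h_{4c2}+T_{22}h_{3c1})^2/Tr(S)$.

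The main obstacle, then, is this algebraic identity among the second-fundamental-form components. Writing out $2\sum_{\alpha,c,a,d}h_{\alpha cd}h_{\alpha ca}S_{da}+2\sum_{\alpha}T_{(\alpha-2)(\alpha-2)}\sum_{c,a}h_{\alpha ca}^2$ with $S_{da}=T_{dd}\delta_{da}$, the first sum only sees $S$ on the diagonal and becomes $2\sum_{\alpha,c,a}T_{aa}h_{\alpha ca}^2$, so the two contributions merge into $2\sum_{\alpha,c,a}(T_{aa}+T_{(\alpha-2)(\alpha-2)})h_{\alpha ca}^2$ (reading indices $\alpha\in\{3,4\}$ against $1,2$). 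I would then substitute $T_{aa}=\frac{1-\lambda_a^2}{1+\lambda_a^2}$, so $T_{aa}+T_{bb}=\frac{2(1-\lambda_a^2\lambda_b^2)}{(1+\lambda_a^2)(1+\lambda_b^2)}=Tr(S)$ when $a\neq b$, while $T_{aa}+T_{aa}=2T_{aa}$. Hence the diagonal-in-$(\text{tangent},\text{normal})$ terms $h_{3c1}$ and $h_{4c2}$ get coefficient $2T_{aa}$, and the off-diagonal terms $h_{3c2}, h_{4c1}$ get coefficient $Tr(S)$. The claim $2|A|^2\,Tr(S)+2\sum_c(T_{11}h_{4c2}+T_{22}h_{3c1})^2/Tr(S)$ must match $2\sum_{\alpha,c,a}(T_{aa}+T_{(\alpha-2)(\alpha-2)})h_{\alpha ca}^2$ after dividing by $Tr(S)$; so I expect to verify, for each $c$, the pointwise identity
\[
2\big(Tr(S)(h_{3c2}^2+h_{4c1}^2)+2T_{11}h_{3c1}^2+2T_{22}h_{4c2}^2\big)
= 2|A_c|^2\,Tr(S)+\tfrac{2}{Tr(S)}(T_{11}h_{4c2}+T_{22}h_{3c1})^2,
\]
where $|A_c|^2=\sum_{\alpha,a}h_{\alpha ca}^2$, using $T_{11}+T_{22}=Tr(S)$ and $T_{11}T_{22}\ge 0$ (the area-decreasing hypothesis, equivalently $Tr(S)>0$ and each $|T_{aa}|<1$... actually $\lambda_1\lambda_2<1$). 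Reducing this to elementary algebra $2(Tr(S)-T_{ii})=2T_{jj}$ and completing the square in the pair $(h_{3c1},h_{4c2})$ should close it; carrying out the index-label matching between the frame indices $3,4$ and the summands of $D$ is the one place where a sign or pairing error is easy, so that is where I would be most careful.
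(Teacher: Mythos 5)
Your computation of the reaction term $\heat Tr(S)=2\sum_{\alpha,c,a}\bigl(S_{aa}+S_{(\alpha-2)(\alpha-2)}\bigr)h_{\alpha ca}^2$ is correct, but the identity you then propose to verify is false, and the reason points to the idea you are missing. After subtracting $2|A|^2\,Tr(S)$, the reaction term is $2(S_{11}-S_{22})\sum_c(h_{3c1}^2-h_{4c2}^2)$, which is \emph{not} a perfect square and can be negative (take $\lambda_1\ne\lambda_2$ with $h_{3c1}=0$, $h_{4c2}\ne 0$), whereas the claimed right-hand side $\frac{2}{Tr(S)}\sum_c(\cdots)^2$ is nonnegative. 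So the reaction term by itself cannot reduce to ``$2|A|^2\,Tr(S)$ plus a square.'' The step you are missing is that the gradient term is itself an algebraic expression in the second fundamental form: restricting $\nabla S$ to $\Sigma$ gives $\nabla_k S_{ii}=-2T_{ii}\,h_{2+i\,ki}$, hence $|\nabla Tr(S)|^2=4\sum_k(T_{11}h_{3k1}+T_{22}h_{4k2})^2$, and it is the combination
\begin{equation*}
4\,Tr(S)\,(S_{11}-S_{22})\sum_c(h_{3c1}^2-h_{4c2}^2)\;+\;|\nabla Tr(S)|^2\;=\;4\sum_c(T_{11}h_{4c2}+T_{22}h_{3c1})^2
\end{equation*}
that produces the perfect square, using the Pythagorean relation $S_{ii}^2+T_{ii}^2=1$ to write $Tr(S)(S_{11}-S_{22})=S_{11}^2-S_{22}^2=T_{22}^2-T_{11}^2$. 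This simultaneously explains why the gradient term in the final formula has coefficient $\tfrac12$ rather than $1$: half of $|\nabla\ln Tr(S)|^2$ is absorbed into the perfect square. Your hope that this coefficient ``should come from a Kato-type inequality or convexity'' is off the mark; it is an exact algebraic cancellation, not an estimate, and the result is an equality, not an inequality.

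There is also a notational slip that would have derailed the completion-of-squares step even if you had found it: you set $T_{ii}=\frac{1-\lambda_i^2}{1+\lambda_i^2}$, but in the paper that quantity is $S_{ii}$ (the diagonal of the tangential block), while $T_{ii}=\frac{2\lambda_i}{1+\lambda_i^2}$ is the magnitude of the mixed block entry $S(e_i,e_{2+i})=-T_{ii}\delta_{ij}$. The $T_{ii}$ in the correction term $(T_{11}h_{4c2}+T_{22}h_{3c1})^2$ really are the mixed-block quantities, because they arise from $\nabla S$ via the off-diagonal blocks; substituting $S_{ii}$ there, as your sketch does, gives a false identity (e.g.\ at $\lambda_1=\lambda_2=0$ your version would force an extraneous $(h_{4c2}+h_{3c1})^2$, whereas the true correction vanishes since $T_{ii}=0$).
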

where $T_{11}=\frac{2 \lambda_1}{(1+\lambda_1^2)}$ and  $T_{22}=\frac{2 \lambda_2}{(1+\lambda_2^2)}$.

\begin{proof}

Using the evolution equation of $S$ (with respect to an orthonormal frame) in equation (\ref{onframes})
and $S(e_{2+i},e_{2+j})=-S_{ij}$, we derive
\[
\begin{split}
& \heat Tr(S)\\
= &  \sum_{a,b}  \delta^{ab} (\sum_{\alpha, c, d}h_{\alpha c d}h_{\alpha c a}S_{db}
  +h_{\alpha c d}h_{\alpha c b}S_{da}  \ -2h_{\alpha c a} h_{\beta c b}S_{\alpha \beta})\\
= & \sum_{a}  (\sum_{\alpha, c}2h_{\alpha c a}^2S_{aa} -2h_{\alpha c a}^2S_{\alpha \alpha})\\
= & \sum_{a}(\sum_{p, c}2h_{2+p\,\, c a}^2S_{aa} +2h_{2+p \,\,c a}^2S_{pp})\\
=& \sum_{p, c}\left[2h_{2+p\,\, c 1}^2S_{11} +2h_{2+p\,\, c 1}^2S_{pp}+ 2h_{2+p\,\, c 2}^2S_{22} +2h_{2+p\,\, c 2}^2S_{pp}\right]\\
= &  \sum_c \left[(2h_{4c1}^2+2h_{3c2}^2)(S_{11}+S_{22})+ 4h_{3c1}^2 S_{11} +4h_{4c2}^2 S_{22}\right]\\
\end{split}
\]
Using $|A|^2= \sum_{c=1}^2 (h_{4c1}^2+h_{3c2}^2+h_{3c1}^2+h_{4c2}^2)$, we obtain
\begin{equation}\label{Tr(S)}
\heat Tr(S)= 2|A|^2 Tr(S)+ 2(S_{11}-S_{22})\sum_{c=1}^2(h_{3c1}^2-h_{4c2}^2).
\end{equation}

We claim the following relation holds:
\begin{eqnarray}
 && 4Tr(S) (S_{11}-S_{22}) \sum_{c=1}^2(h_{3c1}^2-h_{4c2}^2)+|\nabla Tr(S)|^2 \nonumber\\
&&=4 \sum_{c=1}^2 (T_{11}h_{4c2}+ T_{22}h_{3c1})^2.\label{relation}
\end{eqnarray}
Equation \eqref{lnTrS} follows from equations \eqref{Tr(S)} and \eqref{relation}.

In the rest of the proof, we verify equation \eqref{relation}. The covariant derivative of the restriction of
$S$ on $\Sigma$ can be computed by
\[
\begin{split}
& (\nabla_{e_k} S)(e_i,e_j)\\
= \quad & e_k(S(e_i,e_j))- S(\nabla_{e_k} e_i,e_j)-S(e_i,\nabla_{e_k} e_j) \\
= \quad & S(\nabla_{e_k}^{M}e_i -\nabla_{e_k}e_i,e_j)-
S(e_i,\nabla_{e_k}^{M}e_j-\nabla_{e_k} e_j)\\
= \quad & h_{\alpha k i}S_{\alpha j} + h_{\beta k j}S_{\beta i}.
\end{split}
\]

Since $S_{2+p\,\, l} =
 - \frac{2\lambda_p \delta_{pl}}{(1+\lambda_p^2)}$, we derive
\begin{equation}\begin{split}\label{dS}
S_{ij,k }= & - \frac{2h_{2+p\,\, k i}\lambda_p \delta_{pj}}{(1+\lambda_p^2)}
 - \frac{2h_{2+p\,\, k j}\lambda_p \delta_{pi}}{(1+\lambda_p^2)}\\
 = & - h_{2+p\,\, k i}T_{pp} \delta_{pj}
 - h_{2+p\,\, k j}T_{pp}\delta_{pi}\\
 = & - h_{2+j \,\,k i}T_{jj}
 - h_{2+i\,\, k j}T_{ii}.
\end{split}
\end{equation}

In particular, $
\nabla_k (S_{ii})
 =-2T_{ii}h_{2+i\,\,ki}$ and \[
|\nabla Tr(S) |^2
 =4\sum_{k=1}^2(T_{11}^2h_{3k1}^2+2T_{11}T_{22}h_{3k1}h_{4k2}+T_{22}^2h_{4k2}^2) .\]
We compute
\begin{equation*}
\begin{split}
 & 4(S_{11}^2-S_{22}^2) \sum_{k=1}^2(h_{3k1}^2-h_{4k2}^2)+|\nabla Tr(S)|^2 \\
&= 4(S_{11}^2-S_{22}^2) \sum_{k=1}^2(h_{3k1}^2-h_{4k2}^2)\\
& \quad+4\sum_{k=1}^2(T_{11}^2h_{3k1}^2+2T_{11}T_{22}h_{3k1}h_{4k2}+T_{22}^2h_{4k2}^2)\\
&= 4 \sum_{k=1}^2 (T_{11}h_{4k2}+ T_{22}h_{3k1})^2,\\
\end{split}
\end{equation*}
where we use the fact that $S_{11}^2+ T_{11}^2=S_{22}^2+ T_{22}^2=1$ to complete the square in the last equality. This verifies
\eqref{relation}.
\end{proof}


\section{Proof of Theorems}
\subsection{Proof of Theorem \ref{thm1}}
Since $Tr(S)=\frac{2(1-\lambda_1^2\lambda_2^2)}{(1+\lambda_1^2)(1+\lambda_2^2)}$, by Proposition \ref{etrS} and the maximum principle, we deduce that the area decreasing condition is preserved by the mean curvature flow and
$inf_{\Sigma_t}{Tr(S)} \geq \alpha$, where $\alpha=  inf_{\Sigma_0}{Tr(S)}$.
On any $\Sigma_t$, $t>0$,  $Tr(S) \geq \alpha > 0$ and
$(1+\lambda_1^2)(1+\lambda_2^2) < \frac{2}{\alpha}$. Thus $\Sigma_t$ remains as the graph of an area decreasing map.

Next we derive the mean curvature decay estimate.
 Combining the first equation in Lemma \ref{Adecay} (with $\delta=\epsilon=1$) and Proposition \ref{etrS}, we obtain
\[\heat \ln \frac{(t|H|^2+1)}{Tr(S)} \leq
 \frac{1}{2} \nabla \ln \frac{(t|H|^2+1)}{Tr(S)} \cdot \nabla  \ln \left[(t|H|^2+1) Tr(S)\right].\]
 By the maximum principle, we have
$$sup_{\Sigma_t} \ln \frac{(t|H|^2+1)}{Tr(S)} \leq sup_{\Sigma_0} \ln  \frac{1}{Tr(S)}.$$
 Note that $f_t$ remains area decreasing and
$$Tr(S)=\frac{2(1-\lambda_1^2\lambda_2^2)}{(1+\lambda_1^2)(1+\lambda_2^2)}<  2.$$
 This implies that $t|H|^2 \leq Tr(S) sup_{\Sigma_0} \frac{1}{Tr(S)} \leq sup_{\Sigma_0} \frac{2}{Tr(S)}=\frac{2}{\alpha}$. If $\Sigma_2$ is compact we already have longtime existence of the
flow by the methods in \cite{wa1, wa3, tw}. In case $\Sigma_2$ is complete and non-compact we can
use the mean curvature estimate to obtain a $C^0$-estimate on finite time intervals and then
we may proceed as in \cite{ss2} to get longtime existence.

Now we can prove the   $C^\infty$ convergence using the mean curvature decay estimate. By the Gauss formula,
$\int_{\Sigma_t}{|A|^2} d\mu_t=\int_{\Sigma_t}|H|^2 d\mu_t \rightarrow 0 $. Therefore $\int_{\Sigma_t}|A|^2 d\mu_t$ is sufficiently small when $t$ is large enough, the
$\epsilon$ regularity theorem in \cite{il} (see also \cite{ec})
implies $\sup_{\Sigma_t} |A|^2 $ is uniformly bounded.
The general convergence theorem of Simon
\cite{si} implies  $C^\infty$ convergence of $\Sigma_t$ to a minimal submanifold $\Sigma_{\infty}$, which is totally geodesic by the Gauss formula again.
\subsection{Proof of Theorem 2}

Now we prove a decay estimate for  the second fundamental form in the case when $\Sigma$ is a Lagrangian submanifold.
It is well known that $\Sigma_t$ remains as a Lagrangian submanifold in $T^2\times T^2$ from \cite{sm1}.
Using the first equation in Lemma \ref{Aineq} and Proposition \ref{etrS}, we derive
\begin{equation*}
\begin{split}
& \heat \ln( \frac{|A|^2}{Tr(S)^2 }) \\
\le &  \frac{ |\nabla \ln (|A|^2)|^2}{2}  - | \nabla \ln Tr(S)|^2-|A|^2-   \frac{4}{Tr(S)^2}
\sum_{c=1}^2(T_{11}h_{4c2}+T_{22}h_{3c1})^2.\\
=  &\frac{ 1}{2} \nabla \ln(\frac{|A|^2}{Tr(S)^2}) \cdot \nabla \ln (|A|^2 Tr(S)^2)-|A|^2\\
+&  | \nabla \ln Tr(S)|^2-   \frac{4}{Tr(S)^2}
\sum_{c=1}^2(T_{11}h_{4c2}+T_{22}h_{3c1})^2. \\
\end{split}
\end{equation*}

We estimate the last two terms on the right hand side. From equation \eqref{relation}, we obtain
\begin{equation*}
\begin{split}
&
| \nabla \ln Tr(S)|^2
- \frac{4}{Tr(S)^2}\sum_{c=1}^2 (T_{11}h_{4c2}+ T_{22}h_{3c1})^2 \\
&=  - \frac{4(S_{11}-S_{22})}{Tr(S)}\sum_{c=1}^2(h_{3c1}^2-h_{4c2}^2).
\end{split}
\end{equation*}

Let $J$ be the standard almost complex structure on $(T^2, \{x^i\}_{i=1,2})\times (T^2, \{y^j\}_{j=1,2})$ that maps from the tangent space
of the first component to the tangent space of the second one, and vice versa. Suppose $f$ is the
defining map of a Lagrangian surface $\Sigma$ in $(T^2, \{x^i\}_{i=1,2})\times (T^2, \{y^j\}_{j=1,2})$ with respect to $dx^1\wedge
dy^1+dx^2\wedge dy^2$. By the Lagrangian condition, $Jdf $ is a self-adjoint map on the tangent space of $(T^2, \{x^i\}_{i=1,2})$. Therefore,
there exists an orthonormal basis $\{a_i\}_{i=1,2}$ such that
\[df(a_i)=\lambda_i J(a_i), i=1, 2.\]

We can then choose $a_{2+i}=J (a_i)$  in equations \eqref{onf1_1} and \eqref{onf1_3} such that
\[e_i=\frac{1}{\sqrt{1+\lambda_i^2}}(a_i+\lambda_i J(a_{i}) )\] and
\[e_{2+i}=\frac{1}{\sqrt{1+\lambda_i^2}}(J(a_{i}) -\lambda_i
a_{i}), i=1, 2.\] From these expressions, it is easy to check that $e_3=J(e_1)$ and $e_4=J(e_2)$. Since $\Sigma_t$ remains Lagrangian, such orthonormal
frames can be picked at any point on $\Sigma_t$.  Because $J$ is parallel, \[\langle \nabla^M_{e_c} e_2, e_3\rangle= \langle \nabla^M_{e_c} e_1, e_4\rangle\] and we have $h_{3c2}^2=h_{4c1}^2$ for $c=1,2$. Therefore,
\[\Big| \sum_{c=1}^2(h_{3c1}^2-h_{4c2}^2)  \Big|\leq |h_{311}-h_{322}||H_3|+ |h_{411}-h_{422}||H_4|\leq 2\sqrt{2}|A||H|\] and
 \begin{equation*}|| \nabla \ln Tr(S)|^2
- \frac{4}{Tr(S)^2} \sum_{c=1}^2 (T_{11}h_{4c2}+ T_{22}h_{3c1})^2 |\leq C|A||H|,
\end{equation*}
where $C$ depends only on  ${\alpha}=inf_{\Sigma_0} \frac{2(1-\lambda_1^2\lambda_2^2)}{(1+\lambda_1^2)(1+\lambda_2^2)} > 0$ on the initial surface.
For example, $C=\frac{16\sqrt{2}}{{\alpha}}$ suffices.
To this end,
\begin{eqnarray*}
 \heat \ln( \frac{|A|^2}{Tr(S)^2 })
&\le&  \frac{ 1}{2} \nabla \ln(\frac{|A|^2}{Tr(S)^2}) \cdot \nabla \ln (|A|^2 Tr(S)^2)\\
&&  -|A|^2+C|A||H|.
\end{eqnarray*}
Note that $ -|A|^2 \leq - {\alpha}^2\frac{|A|^2}{Tr(S)^2 } $ and $|H|^2\leq \frac{2}{t{\alpha}}$ from Theorem 1, we derive
\[-|A|^2+C|A||H| \leq -\frac{1}{2}|A|^2+\frac{C^2}{2}|H|^2\le -\frac{{\alpha}^2|A|^2}{2Tr(S)^2}+\frac{C^2}{{\alpha}t}.\]

Therefore, the quantity $\frac{|A|^2}{Tr(S)^2 }$ satisfies the following differential inequality:
\begin{equation*}
\begin{split}
& \heat ( \frac{|A|^2}{Tr(S)^2 }) \\
\le  & \frac{1}{2} \nabla (\frac{|A|^2}{Tr(S)^2}) \cdot \nabla \ln (|A|^2 Tr(S)^2)  +(-\frac{{\alpha}^2|A|^2}{2Tr(S)^2}+\frac{C^2}{{\alpha}t}) \frac{|A|^2}{Tr(S)^2 }.\\
\end{split}
\end{equation*}

Consider the ODE
$u' = (-\frac{{\alpha}^2}{2}u+\frac{C^2}{{\alpha}t})u$. Note that $w=\frac{2(1+\frac{C^2}{{\alpha}})}{{\alpha}^2t}$ is a solution to
$u'=(-\frac{{\alpha}^2}{2}u+\frac{C^2}{{\alpha}t})u$ and $\lim_{t\to 0^+}w(t)=\infty$.
Thus we have $sup_{\Sigma_t}(\frac{|A|^2}{Tr(S)^2 })\leq \frac{2(1+\frac{C^2}{{\alpha}})}{{\alpha}^2t}$ and
$|A|^2 \leq\frac{8(1+\frac{C^2}{{\alpha}})}{{\alpha}^2t}$.
\begin{bibdiv}
\begin{biblist}


\bib{cch}{article} {
    AUTHOR = {Chau, Albert},
   AUTHOR = {Chen, Jingyi},
   AUTHOR = {He, Weiyong},
     TITLE = {Lagrangian mean curvature flow for entire {L}ipschitz graphs},
   JOURNAL = {Calc. Var. Partial Differential Equations},
  FJOURNAL = {Calculus of Variations and Partial Differential Equations},
    VOLUME = {44},
      YEAR = {2012},
    NUMBER = {1-2},
     PAGES = {199--220},
      ISSN = {0944-2669},
   MRCLASS = {53C44 (35K55)},
  MRNUMBER = {2898776},
MRREVIEWER = {Robert Haslhofer},
       DOI = {10.1007/s00526-011-0431-x},
       URL = {http://dx.doi.org/10.1007/s00526-011-0431-x},
}

\bib{ccy}{article} {
 AUTHOR = {Chau, Albert},
   AUTHOR = {Chen, Jingyi},
   AUTHOR = {Yuan, Yu},
     TITLE = {Lagrangian mean curvature flow for entire {L}ipschitz graphs
              {II}},
   JOURNAL = {Math. Ann.},
  FJOURNAL = {Mathematische Annalen},
    VOLUME = {357},
      YEAR = {2013},
    NUMBER = {1},
     PAGES = {165--183},
      ISSN = {0025-5831},
   MRCLASS = {53C44 (35A01 35B65 35K15 35K55)},
  MRNUMBER = {3084345},
       DOI = {10.1007/s00208-013-0897-2},
       URL = {http://dx.doi.org/10.1007/s00208-013-0897-2},
}

\bib{ec}{article} {
    AUTHOR = {Ecker, Klaus},
     TITLE = {Regularity theory for mean curvature flow},
   JOURNAL = {Progress in Nonlinear Differential Equations and their
Applications, 57. Birkh\"auser Boston, Inc., Boston, MA, 2004.},
}

\bib{eh1}{article} {
    AUTHOR = {Ecker, Klaus },
    AUTHOR = {Huisken, Gerhard},
     TITLE = {Mean curvature evolution of entire graphs},
   JOURNAL = {Ann. of Math. (2)},
  FJOURNAL = {Annals of Mathematics. Second Series},
    VOLUME = {130},
      YEAR = {1989},
    NUMBER = {3},
     PAGES = {453--471},
      ISSN = {0003-486X},
     CODEN = {ANMAAH},
   MRCLASS = {53A10 (53C45)},
  MRNUMBER = {1025164 (91c:53006)},
MRREVIEWER = {S. Walter Wei},
       DOI = {10.2307/1971452},
       URL = {http://dx.doi.org/10.2307/1971452},
}

\bib{eh2}{article} {
     AUTHOR = {Ecker, Klaus },
    AUTHOR = {Huisken, Gerhard},
     TITLE = {Interior estimates for hypersurfaces moving by mean curvature},
   JOURNAL = {Invent. Math.},
  FJOURNAL = {Inventiones Mathematicae},
    VOLUME = {105},
      YEAR = {1991},
    NUMBER = {3},
     PAGES = {547--569},
      ISSN = {0020-9910},
     CODEN = {INVMBH},
   MRCLASS = {53A10 (35K55 58E12)},
  MRNUMBER = {1117150 (92i:53010)},
MRREVIEWER = {Friedrich Sauvigny},
       DOI = {10.1007/BF01232278},
       URL = {http://dx.doi.org/10.1007/BF01232278},
}


\bib{ha3}{article}{
author={Hamilton, Richard S.},
title={Harnack estimate for the mean curvature
flow},
journal={J. Differential Geom.},
    volume={41},
    date={1995},
    number={1},
    pages={215--226},
  }


\bib{il}
{article}{
   author={Tom Ilmanen},
   title={Singularities of mean curvature flow of surfaces},
   journal={preprint},
   date={1997}}
\bib{lee}  {article} {
    AUTHOR = {Lee, Kuo-Wei},
    AUTHOR = {Lee, Yng-Ing},
     TITLE = {Mean curvature flow of the graphs of maps between compact
              manifolds},
   JOURNAL = {Trans. Amer. Math. Soc.},
  FJOURNAL = {Transactions of the American Mathematical Society},
    VOLUME = {363},
      YEAR = {2011},
    NUMBER = {11},
     PAGES = {5745--5759},
      ISSN = {0002-9947},
     CODEN = {TAMTAM},
   MRCLASS = {53C44},
  MRNUMBER = {2817407 (2012h:53154)},
MRREVIEWER = {Oliver C. Schn{\"u}rer},
       DOI = {10.1090/S0002-9947-2011-05204-9},
       URL = {http://dx.doi.org/10.1090/S0002-9947-2011-05204-9},
}

\bib{LiLi92}{article}{
   author={Li, An-Min},
   author={Li, Jimin},
   title={An intrinsic rigidity theorem for minimal submanifolds in a
   sphere},
   journal={Arch. Math. (Basel)},
   volume={58},
   date={1992},
   number={6},
   pages={582--594},
}

\bib{sch}{article}{
   author={Richard Schoen},
title={The role of harmonic mappings in rigidity and
 deformation problems},
journal={Complex geometry
 (Osaka, 1990), 179--200, Lecture Notes in
 Pure and Appl. Math., 143, Dekker, New York,
 1993.}}

\bib{si}{article}{
   author= {Leon Simon},
     title={Asymptotics for
 a class of nonlinear evolution equations,
 with applications to geometric problems.}
journal={Ann. of Math. (2) 118 (1983), no. 3, 525--571.}
}

\bib{ss}{article}{
   author={Savas-Halilaj, Andreas},
   author={Smoczyk, Knut},
   title={Homotopy of area decreasing maps by mean curvature flow},
   journal={Adv. Math.},
   volume={255},
   date={2014},
   pages={455--473},
   issn={0001-8708},
   review={\MR{3167489}},
   doi={10.1016/j.aim.2014.01.014},
}

\bib{ss2}{article}{ 
author={Savas-Halilaj, Andreas},
author={Smoczyk, Knut},
title={Evolution of contractions by mean curvature flow},
journal={Mathematische Annalen},
date={2014},
pages={1-16},
issn={0025-5831},
doi={10.1007/s00208-014-1090-y},
url={http://dx.doi.org/10.1007/s00208-014-1090-y},
publisher={Springer Berlin Heidelberg},
}

\bib{sm1}{article}{
  author={Smoczyk, Knut},
   title={A canonical way to deform a Lagrangian submanifold},
   journal={arXiv: dg-ga/9605005},
   date={1996},
}

\bib{sm2}{article}{
   author={Smoczyk, Knut},
   title={Mean curvature flow in higher codimension-Introduction and survey},
 BOOKTITLE ={Global Differential Geometry},
  journal={Springer Proc. Math.,17, Part II},
   PAGES = {231--274},
 PUBLISHER = {Springer},
   ADDRESS = {Heidelberg},
      YEAR = {2012},
}

\bib{sm3}{article}{
    AUTHOR = {Smoczyk, Knut},
     TITLE = {Long time existence of the {L}agrangian mean curvature flow},
   JOURNAL = {Calc. Var. Partial Differential Equations},
  FJOURNAL = {Calculus of Variations and Partial Differential Equations},
    VOLUME = {20},
      YEAR = {2004},
    NUMBER = {1},
     PAGES = {25--46},
      ISSN = {0944-2669},
   MRCLASS = {53C44},
  MRNUMBER = {2047144 (2004m:53119)},
MRREVIEWER = {Henri Anciaux},
       DOI = {10.1007/s00526-003-0226-9},
       URL = {http://dx.doi.org/10.1007/s00526-003-0226-9},
}

\bib{sw1}{article}{
   author={Smoczyk, Knut},
   author={Wang, Mu-Tao},
   title={Mean curvature flows of Lagrangian submanifolds with convex
   potentials},
   journal={J. Differential Geom.},
   volume={62},
   date={2002},
   number={2},
   pages={243--257},
   issn={0022-040X},
}

\bib{tw}{article}{
    AUTHOR = {Tsui, Mao-Pei}
    author={Wang, Mu-Tao},
     TITLE = {Mean curvature flows and isotopy of maps between spheres},
   JOURNAL = {Comm. Pure Appl. Math.},
  FJOURNAL = {Communications on Pure and Applied Mathematics},
    VOLUME = {57},
      YEAR = {2004},
    NUMBER = {8},
     PAGES = {1110--1126},
      ISSN = {0010-3640},
     CODEN = {CPAMA},
   MRCLASS = {53C44},
  MRNUMBER = {2053760 (2005b:53110)},
MRREVIEWER = {Henri Anciaux},
       DOI = {10.1002/cpa.20022},
       URL = {http://dx.doi.org/10.1002/cpa.20022},
}

%

\bib{wa1}{article}{
  author={Wang, Mu-Tao},
   title={Mean curvature flow of surfaces in Einstein four-manifolds},
   journal={J. Differential Geom.},
   volume={57},
   date={2001},
   number={2},
   pages={301--338},
   issn={0022-040X},
   review={\MR{1879229 (2003j:53108)}},
}

\bib{wa2}{article}{
AUTHOR = {Wang, Mu-Tao},
     TITLE = {Deforming area preserving diffeomorphism of surfaces by mean
              curvature flow},
   JOURNAL = {Math. Res. Lett.},
  FJOURNAL = {Mathematical Research Letters},
    VOLUME = {8},
      YEAR = {2001},
    NUMBER = {5-6},
     PAGES = {651--661},
      ISSN = {1073-2780},
   MRCLASS = {53C44},
  MRNUMBER = {1879809 (2003f:53122)},
}

\bib{wa3}{article}{
  author={Wang, Mu-Tao},
    title={Long-time existence and convergence of graphic mean curvature flow
    in arbitrary codimension},
    journal={Invent. Math.},
    volume={148},
    date={2002},
    number={3},
    pages={525--543},
    issn={0020-9910},
     review={\MR{1908059 (2003b:53073)}},
 }		
 
\bib{wa4}{article}{
  author={Wang, Mu-Tao},
    title={Gauss maps of the mean curvature flow},
    journal={Math. Res. Lett},
    volume={10},
    date={2003},
    number={2-3}
    pages={287-299},
     review={\MR{1981905 (2004m:53121)}},
 }	

\bib{wa5}{article}{
  author={Wang, Mu-Tao},
    title={Subsets of Grassmannians preserved by mean curvature flows},
    journal={Comm. Anal. Geom.},
    volume={13},
    date={2005},
    number={5},
    pages={981--998},
     review={\MR{2216149 (2006m:53102)}},
 }	
 
 \bib{wa6}{article}{
  author={Wang, Mu-Tao},
    title={Remarks on a class of solutions to the minimal surface system},
     BOOKTITLE ={Geometric evolution equations},
    journal={Contemp. Math.},
    volume={367},
    date={2005},
    pages={229--235},
     review={\MR{2215762 (2005i:53010)}},
 }

\end{biblist}
\end{bibdiv}

\end{document}